\documentclass[12pt]{amsart}
\usepackage{amssymb}

\newtheorem{thm}{Theorem}[section]
\newtheorem{THM}{Theorem}

\newtheorem{lemma}[thm]{Lemma}
\newtheorem{cor}[thm]{Corollary}
\newtheorem{claim}{Claim}[thm]
\newtheorem*{sclaim}{Subclaim}

\theoremstyle{remark}
\newtheorem*{notation}{Notation}
\newtheorem*{remark}{Remark}

\theoremstyle{definition}
\newtheorem{defn}[thm]{Definition}

\fboxsep0.1mm\def\sc{\framebox[3.4mm][l]{$\clubsuit$}\hspace{0.5mm}{}}
\DeclareMathOperator{\pr}{Pr}
\DeclareMathOperator{\rts}{{\bf rts}}
\DeclareMathOperator{\rtts}{{\bf t}}
\DeclareMathOperator{\Tr}{Tr}
\DeclareMathOperator{\tr}{Tr^\circ}
\DeclareMathOperator{\trm}{Tr_{\langle \textit{m} \rangle}^\circ}
\DeclareMathOperator{\trw}{Tr_{\langle \textit{w} \rangle}^\circ}
\DeclareMathOperator{\trs}{Tr_{\sigma}^{\circ}}
\DeclareMathOperator{\ch}{Ch}
\DeclareMathOperator{\pp}{pp}
\DeclareMathOperator{\sk}{sk}

\DeclareMathOperator{\dom}{dom}
\DeclareMathOperator{\rng}{rng}
\DeclareMathOperator{\gch}{\text{\sf GCH}}
\DeclareMathOperator{\zfc}{\text{\sf ZFC}}
\DeclareMathOperator{\lift}{Lift}
\DeclareMathOperator{\cf}{cf}

\def\s{\subseteq}
\def\bks{\setminus}

\begin{document}
\title[transforming rectangles into squares]{transforming rectangles into squares, with applications to strong colorings}
\author{Assaf Rinot}
\address{Center for advanced studies in Mathematics,
Ben-Gurion University of the Negev, P.O.B. 653,
Be'er Sheva 84105,
Israel.}
\email{paper13@rinot.com}
\urladdr{http://www.assafrinot.com}
\begin{abstract} It is proved that every singular cardinal  $\lambda$ admits a function $\rts:[\lambda^+]^2\rightarrow[\lambda^+]^2$
that transforms rectangles into squares. Namely, for every cofinal subsets  $A,B$ of $\lambda^+$, there exists a cofinal subset $C\s\lambda^+$,
such that $\rts[A\circledast B]\supseteq C\circledast C$.

When combined with a recent result of Eisworth, this shows that Shelah's notion of strong coloring $\pr_1(\lambda^+,\lambda^+,\lambda^+,\cf(\lambda))$
coincides with the classical negative partition relation $\lambda^+\not\rightarrow[\lambda^+]^{2}_{\lambda^+}$.
\end{abstract}
\maketitle

\section{Introduction}
One of the fundamental fields of research in infinite combinatorics is the study of partition relations.
To recall some results in this area, we shall need the following piece of notation.
For cardinals $\kappa$ and $\theta$, the (negative) partition relation $\kappa\not\rightarrow[\kappa]^2_\theta$
stands for the existence of a function $f:[\kappa]^2\rightarrow\theta$ such that
$f``[A]^2=\theta$ for every $A\s \kappa$ of size $\kappa$.

In a paper from 1929, Ramsey  proved 
that $\omega\not\rightarrow[\omega]^2_2$ \emph{fails},
that is, every symmetric function $f:[\omega]^2\rightarrow 2$ admits an infinite homogenous set.
A few years later, Sierpi\'nski 
proved that Ramsey's theorem does not generalize to higher cardinals, introducing
a function witnessing $\omega_1\not\rightarrow[\omega_1]^2_2$.
At the 1960's,
Erd\"os, Hajnal and Rado \cite{partitionrelationsforcardinalnumbers} proved that the Generalized Continuum Hypothesis ($\gch$) implies
$\lambda^+\not\rightarrow[\lambda^+]^{2}_{\lambda^+}$ for every infinite cardinal $\lambda$ (see also \cite[Theorem 49.1]{combintorialsettheory}),
and at the mid 1980's, Todorcevic \cite{todorcevic} made a breakthrough, proving outright in $\zfc$ that $\lambda^+\not\rightarrow[\lambda^+]^2_{\lambda^+}$
holds for every regular cardinal $\lambda$.

The question of whether $\lambda^+\not\rightarrow[\lambda^+]^2_{\lambda^+}$ may fail for a singular cardinal $\lambda$ is still open.
In parallel, throughout the years, a study of stronger notions of negative partition relations was carried (see, for instance, Shelah \cite{sh276},\cite{sh:g},\cite{sh572},
and Shelah-Eisworth \cite{sh535},\cite{EiSh:819}).
In this paper, we shall be interested in the following two particular strengthenings:
\begin{itemize}
\item (Rectangular form) $\kappa\not\rightarrow[\kappa;\kappa]^2_\theta$
asserts the existence of a function $f:[\kappa]^2\rightarrow\theta$ such that
$f[A\times B]=\theta$ for every $A\s\kappa,B\s\kappa$ of size $\kappa$.
\item (Shelah's strong coloring \cite{sh:g}) $\pr_1(\kappa,\kappa,\theta,\sigma)$ asserts the existence of a  function $f:[\kappa]^2\rightarrow\theta$ such that
for every $\mathcal A\subseteq\mathcal P(\kappa)$ of size $\kappa$, consisting of pairwise disjoint sets of size $<\sigma$, and every $\xi<\theta$,
there exists distinct $a,b\in\mathcal A$ such that $f[a\times b]=\{\xi\}$.
\end{itemize}

Comparing the above three concepts is, of course, a very natural task.
To appreciate this task,
we mention that while the following results goes back to the work of Todorcevic from the 1980's:
\begin{itemize}
\item $\omega_1\not\rightarrow[\omega_1]^2_{\omega_1}$;
\item $\pp(\lambda)=\lambda^+$
implies $\lambda^+\not\rightarrow[\lambda^+]^2_{\lambda^+}$ for every singular cardinal $\lambda$,
\end{itemize}
the stronger versions was proved rather recently:
\begin{itemize}
\item (Moore, 2006 \cite{lspace}) $\omega_1\not\rightarrow[\omega_1;\omega_1]^2_{\omega_1}$;
\item (Eisworth, 2009 \cite{eisworth2}) $\pp(\lambda)=\lambda^+$ implies $\pr_1(\lambda^+,\lambda^+,\lambda^+,\cf(\lambda))$
for every singular cardinal $\lambda$.
\end{itemize}

In \cite{rinot12}, the author introduced the \emph{Ostaszewski square} $\sc_\kappa$,
which is a generalization of Jensen's square principle $\square_\kappa$ from \cite{jensen1972}.
One of the application appearing in \cite{rinot12} addresses negative partition relations that are derived from Ostaszewski squares.
Of course, we were hoping to yield as stronger partition relations as possible,
and incidently, around the same time, Eisworth \cite{eisworth2} made a significant progress on the old question of comparing the different notions,
showing that some strong coloring statements
are indeed equivalent to seemingly weaker statements.

After studying Eisworth's works \cite{eisworth2},\cite{eisworth1}, we realized that his arguments may
be pushed further to completely clarify the situation at the successor of singular cardinals,
and indeed,  the main result of this paper reads as follows.

\begin{THM}\label{t11} For every singular cardinal $\lambda$, and every $\theta\le\lambda^+$, the following are equivalent:
\begin{enumerate}
\item $\lambda^+\not\rightarrow[\lambda^+]^2_\theta$;
\item $\lambda^+\not\rightarrow[\lambda^+;\lambda^+]^2_\theta$;
\item $\pr_1(\lambda^+,\lambda^+,\theta,\cf(\lambda))$.
\end{enumerate}
\end{THM}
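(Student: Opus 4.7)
The easy half is the chain $(3) \Rightarrow (2) \Rightarrow (1)$. Implication $(2) \Rightarrow (1)$ is immediate upon specializing the rectangular witness to $A = B$, since then $f[A \times A] = f``[A]^2 = \theta$. For $(3) \Rightarrow (2)$, given cofinal $A, B \s \lambda^+$ and a target color $\xi < \theta$, I would enumerate $\lambda^+$-many pairwise disjoint doubletons $a_i = \{\alpha_i, \beta_i\}$ with $\alpha_i \in A$ and $\beta_i \in B$; since $|a_i| = 2 < \aleph_0 \le \cf(\lambda)$, the family $\mathcal{A} = \{a_i : i < \lambda^+\}$ is admissible in $\pr_1(\lambda^+,\lambda^+,\theta,\cf(\lambda))$, so it produces distinct $i \ne j$ with $f[a_i \times a_j] = \{\xi\}$. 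In particular $f\{\alpha_i, \beta_j\} = \xi$, and since $\alpha_i \in A$, $\beta_j \in B$, $\alpha_i \ne \beta_j$, we conclude $\xi \in f[A \circledast B]$.

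The substantive content is $(1) \Rightarrow (2)$, and this is exactly the one-line consequence of the rectangle-to-square transformation promised in the abstract. Granting the main technical theorem of the paper, namely the existence of $\rts : [\lambda^+]^2 \to [\lambda^+]^2$ with the stated property, take any $f : [\lambda^+]^2 \to \theta$ witnessing (1) and define $g := f \circ \rts$. For cofinal $A, B \s \lambda^+$, the defining property of $\rts$ produces a cofinal $C \s \lambda^+$ with $\rts[A \circledast B] \supseteq C \circledast C$, so
\[
g[A \circledast B] \;=\; f\bigl[\rts[A \circledast B]\bigr] \;\supseteq\; f[C \circledast C] \;=\; f``[C]^2 \;=\; \theta,
\]
the final equality because $|C| = \lambda^+$ and $f$ witnesses (1). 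Thus $g$ witnesses (2).

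The remaining implication $(2) \Rightarrow (3)$ is handled as a black-box appeal to Eisworth's recent work \cite{eisworth2}: a rectangular strong coloring $\lambda^+ \not\to [\lambda^+;\lambda^+]^2_\theta$ can be upgraded to a $\pr_1$-coloring with parameter $\sigma = \cf(\lambda)$ using club-guessing at singulars combined with the pcf machinery. The present paper's only responsibility on this implication is to confirm the hypothesis, which is precisely what the previous paragraph supplies.

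The one genuinely hard step, and the raison d'\^etre of the paper, is the construction of $\rts$ itself. This is where I expect all the technical weight to sit: routing every rectangle $A \circledast B$ through a common diagonal square $C \circledast C$ at the successor of a singular cardinal should require a careful combination of Todorcevic-style walks on $\lambda^+$, a Shelah scale on $\lambda$, and club-guessing sequences on $\cof(\cf(\lambda))$-points of $\lambda^+$, with the singularity of $\lambda$ entering essentially (the regular analogue at $\omega_1$ being already handled by Moore in a different framework). The proof of Theorem~\ref{t11} modulo $\rts$ is essentially bookkeeping; the problem is to produce $\rts$.
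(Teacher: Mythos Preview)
Your proposal is correct and matches the paper's architecture: $(1)\Rightarrow(2)$ via $g:=f\circ\rts$ is exactly what the paper does, and your identification of the $\rts$ construction as the load-bearing step is accurate. The one organizational difference worth noting is in $(2)\Rightarrow(3)$. The paper does not cite Eisworth's theorem as a finished implication; instead it packages a stronger auxiliary function $\rtts:[\lambda^+]^2\to[\lambda^+]^2\times\cf(\lambda)$ (Theorem~\ref{t31}) by composing Eisworth's function $D'$ from \cite{eisworth2}---which sends pairwise-disjoint families to \emph{rectangles} $S\circledast T$---with $\rts$ once more to convert that rectangle into a square, and then defines $g(\alpha,\beta):=f(\alpha',\beta')$ where $\rtts(\alpha,\beta)=(\alpha',\beta',\delta)$. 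Your route is leaner: since you already hold a \emph{rectangular} witness $f$ at stage~(2), Eisworth's $D'$ alone (without the second pass through $\rts$) suffices to produce a $\pr_1$-witness by the same composition. Both are valid; the paper's $\rtts$ is a bit stronger than needed for $(2)\Rightarrow(3)$ because it in fact yields $(1)\Rightarrow(3)$ directly. Your easy direction $(3)\Rightarrow(2)\Rightarrow(1)$ via doubletons is fine, though the paper simply observes that any $\pr_1$-witness is already a witness for~(1).
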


We mention that the above yields an alternative proof to Theorem 5 of \cite{eisworth2} (that is, a proof that does not appeal to \cite{sh572}),
as well as an affirmative answer to the question
of Eisworth raised in \cite{eisworth3} of
whether the failure of $\pr_1(\lambda^+,\lambda^+,\lambda^+,\cf(\lambda))$
implies that any collection of less than $\cf(\lambda)$ many stationary subsets of $\lambda^+$ must reflect simultaneously.

The actual proof of Theorem \ref{t11} goes through establishing the existence of
a function of its own interest: a function that transforms rectangles $A\circledast B$ into squares $[C]^2$.\footnote{Here,
$A\circledast B$ stands for the set $\{(\alpha,\beta)\in A\times B\mid \alpha<\beta\}$, and $[C]^2:=C\circledast C$.}

\begin{THM}\label{t12} For every singular cardinal $\lambda$,
there exists a function $\rts:[\lambda^+]^2\rightarrow[\lambda^+]^2$ such that
for every cofinal subsets $A,B$ of $\lambda^+$, there exists a cofinal $C\s\lambda^+$ such that $\rts[A\circledast B]\supseteq [C]^2$.

Moreover:
\begin{itemize}
\item for every cofinal subsets $A,B$ of $\lambda^+$, and every $\kappa<\lambda$,
there exists a stationary subset $S\s E^{\lambda^+}_{>\kappa}$ such that $\rts[A\circledast B]\supseteq [S]^2$;\footnote{Here, $E^{\lambda^+}_{>\kappa}$ stands for the set $\{\alpha<\lambda^+\mid \cf(\alpha)>\kappa\}$.}
\item if $\alpha<\beta<\lambda^+$ and $\rts(\alpha,\beta)=(\alpha^*,\beta^*)$, then $\alpha^*\le\alpha<\beta^*\le\beta$.
\end{itemize}
\end{THM}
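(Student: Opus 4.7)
My plan is to use Todorcevic's method of minimal walks, augmented by club-guessing on $E^{\lambda^+}_{\cf(\lambda)}$, an ingredient available precisely because $\lambda$ is singular. First, I would fix a $C$-sequence $\vec C = \langle C_\alpha : \alpha < \lambda^+\rangle$ with $\otp(C_\alpha) \le \lambda$, and form the associated minimal walk: for $\alpha < \beta$, let $\tr(\alpha,\beta) = \langle \beta_0,\ldots,\beta_n\rangle$ with $\beta_0 = \beta$, $\beta_{i+1} = \min(C_{\beta_i}\bks\alpha)$, and $\beta_n = \alpha$. I would then define $\rts(\alpha,\beta) := (\alpha^*, \beta^*)$ by taking $\beta^* := \beta_{n-1}$ (the penultimate step of the walk, which automatically lies in the interval $(\alpha,\beta]$), and $\alpha^* := \sup(C_{\beta^*}\cap\alpha)$ (a closed operation, giving an ordinal $\le\alpha$). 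The structural inequalities $\alpha^* \le \alpha < \beta^* \le \beta$ are then immediate from the definition.

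The heart of the proof is the stationary refinement: given cofinal $A,B \s \lambda^+$ and $\kappa < \lambda$, produce a stationary $S \s E^{\lambda^+}_{>\kappa}$ with $\rts[A\circledast B] \supseteq [S]^2$. My plan is to fix a strong club-guessing sequence $\vec c = \langle c_\delta : \delta \in T\rangle$ on a stationary $T \s E^{\lambda^+}_{\cf(\lambda)}$ (whose existence at the successor of a singular is a classical theorem of Shelah), and then, using the club $D$ of common accumulation points of $A$ and $B$, pass to a stationary $T' \s T$ for which $c_\delta \s D$ for every $\delta \in T'$. From $T'$ I would carve out $S$ as a stationary set of critical points along which the $C$-sequence behaves coherently, and then intersect with $E^{\lambda^+}_{>\kappa}$ --- this preserves stationarity since $\kappa < \lambda$ and the set of cofinalities appearing in $T'$ can be arranged to avoid $\kappa$.

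The principal obstacle is the inversion step: showing that every pair $(\gamma, \delta) \in [S]^2$ does arise as $\rts(\alpha, \beta)$ for some $\alpha \in A$ and $\beta \in B$. Given $\gamma < \delta$ in $S$, one must exhibit $\alpha \in A$ with $\sup(C_\delta \cap \alpha) = \gamma$ --- forcing $\gamma \in C_\delta$ and placing $\alpha$ in a specific open interval of $C_\delta$ --- together with $\beta \in B$ above $\delta$ whose walk reaches $\alpha$ with penultimate step exactly $\delta$. The density of candidate $\alpha$'s is addressed by choosing $S$ to respect the skeleton of a suitable $C_\delta$, while the existence of a matching $\beta$ will come from the club-guessing property of $\vec c$, which ensures that $\delta$ is hit by walks from arbitrarily high $B$-points. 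I anticipate following the coherent-walks analysis developed by Eisworth in \cite{eisworth2,eisworth1}, with the singularity of $\lambda$ entering decisively through the club-guessing at $\cf(\lambda)$; this is also the step most likely to require delicate adjustments to the definition of $\rts$ (for instance, replacing $\sup(C_{\beta^*}\cap\alpha)$ by a cleverer function of the walk) to ensure flexibility. The cofinal version asserted in the first clause of the theorem then follows immediately, as any stationary set is cofinal.
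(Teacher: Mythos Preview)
Your outline assembles the right ambient machinery---minimal walks and club guessing---but the concrete definition of $\rts$ you propose is too rigid to admit the inversion, and the missing ingredient is substantial: a scale, used as an oracle to tell $\rts$ which steps of the walks to output.

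Concretely: with $\rts(\alpha,\beta)=(\sup(C_{\beta^*}\cap\alpha),\beta^*)$ where $\beta^*$ is the penultimate step, fix any $\delta\in S$. The set of first coordinates $\alpha^*$ that can ever arise paired with second coordinate $\delta$ is contained in the closure of $C_\delta$, hence has size $<\lambda$. But for $[S]^2\subseteq\rts[A\circledast B]$ you need \emph{every} $\gamma\in S\cap\delta$ to appear as such an $\alpha^*$, and $|S\cap\delta|$ will typically be $\lambda$. No choice of $S$ repairs this: the obstruction is that $\alpha^*$ is read off from a single club of size $<\lambda$. (The same cardinality obstruction applies, in a subtler form, to the second coordinate: the penultimate step of the walk from $\beta$ to $\alpha$ is determined once the top of the walk is fixed, so you have no freedom to hit a prescribed $\delta$.)

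The paper resolves this by two moves. First, $\alpha^*$ is not a sup but the output of a \emph{second} walk, from $\alpha$ down to $\eta+1$ where $\eta$ is computed from the first walk. Second, and crucially, the function does not always output a fixed step: it reads indices $(k_1,k_2)$ from $\psi(\Delta(\alpha,\beta))$, where $\Delta$ is the last-disagreement index along a fixed scale $\langle f_\alpha:\alpha<\lambda^+\rangle$ and $\psi:\cf(\lambda)\to\omega\times\omega$ is a surjection with cofinal fibers, and then outputs $(\alpha_{k_2},\beta_{k_1})$. In the inversion, one first stabilizes $(k_1,k_2,\eta)$ by Fodor on the stationary set produced by the key lemma, and then, given $(\alpha^*,\beta^*)\in[S^*]^2$, \emph{chooses} $\alpha\in A,\beta\in B$ so that $\Delta(\alpha,\beta)$ lands in $\psi^{-1}(k_1,k_2)$; this is exactly where Baumgartner's lemma on characteristic functions of Skolem hulls enters. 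Without the scale there is no mechanism for the pair $(\alpha,\beta)$ to signal which steps to extract, and the ``delicate adjustment'' you anticipate is in fact the heart of the argument, not a detail.

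Two further gaps. The $\overrightarrow C$-sequence in the paper is not arbitrary: it is built to swallow the club-guessing sequence (if $\delta\in C_\alpha\cap E^{\lambda^+}_{\cf(\lambda)}$ then $e_\delta\subseteq C_\alpha$), which is what makes the walk-splicing in the key lemma go through. And the case $\cf(\lambda)=\omega$ needs a genuinely different treatment, via Eisworth's off-center club guessing (a matrix $\langle e^m_\delta\rangle$ and generalized walks indexed by finite sequences), since the club-guessing theorem you invoke requires $\cf(\lambda)>\omega$.
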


\subsection*{Acknowledgement}
We would like to express our intellectual debt to Eisworth, Shelah, and Todorcevic,
whom technology is in the heart of this paper.
While the results of this paper yields an answer to a question of Eisworth from \cite{eisworth3},
the proof of  $(2)\Rightarrow (3)$ in Theorem \ref{t11} builds on the main result from Eisworth's \cite{eisworth2}
and on Theorem \ref{t12}, whereas, the proof of Theorem \ref{t12} is just the outcome of several complications of Eisworth's arguments from \cite{eisworth1}.

\section{Transforming rectangles into squares}
In this section we prove Theorem \ref{t12}, where the proof splits into two cases --- the case $\cf(\lambda)>\omega$,
and the case $\cf(\lambda)=\omega$. In either case, we first define the function $\rts$, and then verify that it works.

We mention that the definition(s) of $\rts$ are virtually the same as that of the function $D$ from \cite[Theorem 1]{eisworth1},
and that our analysis of these functions is just a slight extension of Eisworth's analysis from \cite{eisworth1}.

\subsection{The uncountable cofinality case}
In this subsection, we prove Theorem \ref{t12}, for the case that $\lambda$ is a singular cardinal of uncountable cofinality.
We shall rely on Shelah's theorem on club guessing.
\begin{thm}[Shelah,  \cite{sh:g}]\label{31} For every cardinal $\lambda>\cf(\lambda)>\omega$,
there exists a sequence $\overrightarrow e=\langle e_\delta\mid \delta\in
E^{\lambda^+}_{\cf(\lambda)}\rangle$ such that:
\begin{enumerate}
\item $e_\delta$ is a club in $\delta$ of order-type $\cf(\lambda)$, for all $\delta\in E^{\lambda^+}_{\cf(\lambda)}$;
\item for every club $E\s\lambda^+$, there exist stationarily many $\delta\in E^{\lambda^+}_{\cf(\lambda)}$ for which
$\sup\{ \cf(\alpha)\mid \alpha\in e_\delta\cap E\}=\lambda$.
\end{enumerate}
\end{thm}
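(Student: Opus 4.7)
The plan is to follow Shelah's classical strategy, which combines an iterative thinning of candidate sequences with the PCF technology of scales. Write $\mu := \cf(\lambda)$, so by hypothesis $\omega < \mu < \lambda$. First I would fix a strictly increasing sequence $\langle \lambda_i \mid i < \mu \rangle$ of regular cardinals cofinal in $\lambda$ with $\lambda_0 > \mu$, together with a scale $\vec f = \langle f_\alpha \mid \alpha < \lambda^+ \rangle$ on $\prod_{i<\mu}\lambda_i$ modulo the bounded ideal on $\mu$; such a scale is delivered by Shelah's PCF theorem. For each $\delta \in E^{\lambda^+}_{\mu}$, fix an arbitrary club $e^0_\delta \s \delta$ of order type $\mu$, and let $\overrightarrow{e^0}$ denote the resulting sequence.

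Next, assuming toward a contradiction that no candidate sequence meets (1) and (2) simultaneously, I would build by transfinite recursion on $i < \mu^+$ a $\s$-decreasing chain of candidates $\langle \overrightarrow{e^i} \mid i < \mu^+\rangle$: at each successor stage choose a club $E_i \s \lambda^+$ witnessing the failure of $\overrightarrow{e^i}$, and let $e^{i+1}_\delta$ be a cofinal closed subset of $e^i_\delta \cap E_i$ in $\delta$ of order type $\mu$ (possible for club-many $\delta \in E^{\lambda^+}_{\mu}$ by a Fodor-style argument combined with the fact that $\cf(\delta) = \mu$), passing to diagonal intersections at limits. Shelah's insight is that this process must stabilize well before stage $\mu^+$ on a large set of $\delta$'s, since each strict thinning strictly decreases a pcf-theoretic invariant attached to the pair $(e^i_\delta, \vec f)$ that cannot decrease $\mu^+$ many times.

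The main obstacle, and the heart of the argument, is the verification that the stabilized sequence $\overrightarrow{e^{\infty}}$ actually guesses: given any club $E \s \lambda^+$, one must produce stationarily many $\delta \in E^{\lambda^+}_\mu$ at which $\sup\{\cf(\alpha) \mid \alpha \in e^{\infty}_\delta \cap E\} = \lambda$. The crucial input is that the set of good points of $\vec f$ of cofinality $\mu$ is stationary in $\lambda^+$; at any such good $\delta$, the $f$-values along any cofinal subset of $\delta$ are unbounded in $\prod_{i<\mu}\lambda_i$ modulo the bounded ideal, so for cofinally many $i < \mu$ one finds $\alpha \in e^{\infty}_\delta \cap E$ with $f_\alpha(i)$ arbitrarily large below $\lambda_i$, and this forces $\cf(\alpha) \ge \lambda_i$. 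I expect synchronizing the thinning with this good-points analysis — so that the supremum of cofinalities really reaches $\lambda$, rather than getting stuck at some fixed $\lambda_i$ — to be where most of the technical difficulty concentrates.
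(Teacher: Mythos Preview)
The paper does not prove this theorem at all: it is quoted as a black box from Shelah's \emph{Cardinal Arithmetic} and used without argument. So there is nothing in the paper to compare your proposal against.

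That said, your sketch contains a genuine error. The final step, ``for cofinally many $i<\mu$ one finds $\alpha\in e^{\infty}_\delta\cap E$ with $f_\alpha(i)$ arbitrarily large below $\lambda_i$, and this forces $\cf(\alpha)\ge\lambda_i$,'' is simply false: $f_\alpha(i)$ is just an ordinal below $\lambda_i$ and carries no information whatsoever about $\cf(\alpha)$. Goodness of $\delta$ for the scale controls the pointwise suprema of the functions $f_\alpha$ along cofinal subsets of $\delta$, not the cofinalities of the indexing ordinals $\alpha$. So the scale apparatus, as you have deployed it, does not touch condition~(2) at all.

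In fact Shelah's proof of this particular statement does not use scales. The iterative thinning you describe is the right engine, but it is run directly: one fixes a sequence $\langle\lambda_i\mid i<\mu\rangle$ cofinal in $\lambda$, chooses the initial $e^0_\delta$ so that its $i$-th member has cofinality $\lambda_i$ (possible on a club of $\delta$'s since each $E^{\lambda^+}_{\lambda_i}$ is stationary), and then thins by intersecting with counterexample clubs. Stabilization before stage $\mu^+$ is forced for the elementary reason that each $e_\delta$ has only $\mu$ points, so a strictly $\subseteq$-decreasing chain of length $\mu^+$ is impossible; once the sequence stabilizes, the intersection of the $\mu^+$ counterexample clubs yields a contradiction. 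The cofinality condition~(2) then follows because a tail of the stabilized $e_\delta$ lies inside $E$, and that tail still contains points of cofinality $\lambda_i$ for all large $i$.
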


\begin{lemma}[Shelah, \cite{sh:g}]\label{32} Let $\lambda$ and $\overrightarrow e$ be as in the preceding theorem.
Then there exists a sequence $\overrightarrow C=\langle C_\alpha\mid
\alpha<\lambda^+\rangle$ such that for every $\alpha<\lambda^+$:
\begin{itemize}
\item $\max(C_{\alpha+1})=\alpha$;
\item $C_\alpha$ is a club in $\alpha$ of size $<\lambda$;
\item if $\delta\in C_\alpha\cap E^{\lambda^+}_{\cf(\lambda)}$, then $e_\delta\s C_\alpha$.
\end{itemize}
\end{lemma}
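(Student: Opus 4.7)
My plan is to build $\overrightarrow C=\langle C_\alpha\mid\alpha<\lambda^+\rangle$ by recursion on $\alpha$, using the club-guessing sequence $\overrightarrow e$ as a catalyst. Set $C_0:=\emptyset$ and $C_{\beta+1}:=C_\beta\cup\{\beta\}$; the first bullet is then automatic, and at a successor stage the third bullet reduces to the demand that $e_\beta\s C_\beta$ whenever $\cf(\beta)=\cf(\lambda)$, which I will arrange by the definition at the preceding limit stage.

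For $\alpha$ a limit, I iteratively close a cofinal seed. Start with $D_0\s\alpha$ cofinal of order type $\cf(\alpha)$, choosing $D_0:=e_\alpha$ whenever $\cf(\alpha)=\cf(\lambda)$; this will guarantee $e_\alpha\s C_\alpha$ and feed the successor clause at the next stage. Given $D_n$, let $\overline{D_n}$ denote its topological closure inside $\alpha$, and set
$$D_{n+1}:=\overline{D_n}\cup\bigcup\{e_\delta\mid\delta\in\overline{D_n}\cap E^{\lambda^+}_{\cf(\lambda)}\}.$$
Finally, let $C_\alpha:=\overline{\bigcup_{n<\omega}D_n}$, the topological closure taken in $\alpha$.

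It remains to verify the three bullets. The size bound follows from an easy induction: setting $\mu:=\max\{\cf(\alpha),\cf(\lambda)\}<\lambda$, we have $|\overline{D_n}|=|D_n|$ and $|e_\delta|=\cf(\lambda)$, so $|D_{n+1}|\le\mu$, whence $|C_\alpha|\le\mu<\lambda$. Unboundedness of $C_\alpha$ is inherited from $D_0$, and closedness is immediate. The crucial step is the third bullet, and it is precisely here that the standing assumption $\cf(\lambda)>\omega$ enters. Suppose $\delta\in C_\alpha$ has $\cf(\delta)=\cf(\lambda)$ but $\delta\notin\bigcup_n D_n$. Then $\delta$ is a topological limit of $\bigcup_n(D_n\cap\delta)$; since this is an $\omega$-indexed union and $\cf(\delta)>\omega$, some individual $D_n\cap\delta$ must already be cofinal in $\delta$, placing $\delta\in\overline{D_n}\s D_{n+1}$ and contradicting $\delta\notin\bigcup_n D_n$. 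Hence $\delta\in D_n$ for some $n$, and by construction $e_\delta\s D_{n+1}\s C_\alpha$, as required. The main obstacle is the interplay of the two closure operations — adjoining $e_\delta$'s and taking topological limits — which a priori could chase one another and inflate the cardinality past $\lambda$; the uncountable cofinality of $\lambda$ is exactly what halts the iteration at stage $\omega$ while simultaneously preventing any problematic newcomer of cofinality $\cf(\lambda)$ from appearing in the final closure.
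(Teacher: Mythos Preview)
Your argument is correct and is essentially the standard ``closing-off'' construction one finds (implicitly or explicitly) in Shelah's work. Note, however, that the paper does not actually supply a proof of this lemma: it is simply attributed to Shelah and cited from \cite{sh:g}, so there is no in-paper proof to compare against. Your recursion is the expected one; the only point worth remarking is that your verification of the third bullet at limit $\alpha$ hinges on the chain $D_0\subseteq\overline{D_0}\subseteq D_1\subseteq\cdots$ being increasing, which you use (correctly) to conclude that if $\bigcup_n(D_n\cap\delta)$ is cofinal in $\delta$ with $\cf(\delta)>\omega$ then already some single $D_n\cap\delta$ is cofinal. That monotonicity is immediate from your definition of $D_{n+1}$, so the argument goes through.
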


From now on, we fix a cardinal $\lambda$ and a sequence
$\overrightarrow e,\overrightarrow C$ as in the preceding, and shall be conducting \emph{minimal walks} \cite{MR2355670}
along this $\overrightarrow C$-sequence.
For $\alpha<\beta<\lambda^+$, let $\beta_0:=\beta$, and $\beta_{n+1}:=\min(C_{\beta_n}\bks\alpha)$
whenever $n<\omega$ and $\beta_n>\alpha$.
The decreasing sequence $\beta=\beta_0>\beta_1>\ldots\beta_{k}=\alpha$ is called the
$\overrightarrow C$-walk from $\beta$ down to $\alpha$, and we denote
\begin{itemize}
\item $\rho(\alpha,\beta):=k$;
\item $\Tr(\alpha,\beta):=\{ \beta_0,\ldots,\beta_k\}$;
\item $\tr(\alpha,\beta):=\Tr(\alpha,\beta)\bks\{\alpha\}$.
\end{itemize}

Next, by Shelah \cite{sh:g}, we may fix a scale  $(\overrightarrow\lambda,\overrightarrow f)$  for $\lambda$.
That is, a sequence $\overrightarrow\lambda=\langle \lambda_i\mid i<\cf(\lambda)\rangle$
of regular cardinals which is strictly increasing and cofinal in $\lambda$,
and a sequence $\overrightarrow f=\langle f_\alpha\mid\alpha<\lambda^+\rangle$
of elements from $\prod\overrightarrow\lambda$ which is strictly increasing and cofinal in $\left\langle \prod\overrightarrow\lambda,\le^*\right\rangle$.
So, $\alpha<\beta<\lambda^+$ implies that $\{ i<\cf(\lambda)\mid f_\alpha(i)\ge f_\beta(i)\}$ is of size $<\cf(\lambda)$.
In particular, it makes sense to consider the next ordinal:
$$\Delta(\alpha,\beta):=\sup\{ i<\cf(\lambda)\mid f_\alpha(i)\ge f_\beta(i)\}.$$

Let $\psi:\cf(\lambda)\rightarrow\omega\times\omega$  be some
surjection such that the pre-image of every element is cofinal in $\cf(\lambda)$.

Fix $\alpha<\beta<\lambda^+$, and let us define $\rts(\alpha,\beta)$.
Let $k_1$, $k_2$  be such that
$\psi(\Delta(\alpha,\beta))=(k_1,k_2).$
Next, let:
\begin{itemize}
\item $\beta_0>\dots>\beta_{n}$ denote the $\overrightarrow C$-walk from $\beta$ down to $\alpha$;
\item $\eta:=\max\{\sup(C_{\tau}\cap\beta_{k_1})\mid \tau\in\tr(\beta_{k_1},\beta)\}$;
\item $\alpha_0>\ldots>\alpha_{m}$ denote the $\overrightarrow C$-walk from $\alpha$ down to $\eta+1$;
\end{itemize}

We would like to assign $\rts(\alpha,\beta):=(\alpha_{k_2},\beta_{k_1})$.
Of course, if $k_1> n$, $\alpha<\eta+1$, or $k_2> m$,
then the above definition would not make sense. In such cases, we just put $\rts(\alpha,\beta):=(\alpha,\beta)$.

In Theorem \ref{t25} below, we prove that $\rts$ has the required transfer property.
The proof builds on a technical lemma (Lemma \ref{344}), whose statement requires the following piece of notation.

\begin{defn}\label{dddd} For  $A\s\lambda^+$, $\eta\le\epsilon<\lambda^+$, and $k<\omega$, let
$$A_{k,\eta}(\epsilon):=\left\{\alpha\in A\mid \rho(\epsilon,\alpha)=k, \max\{\sup(C_\tau\cap\epsilon)\mid \tau\in\tr(\epsilon,\alpha)\}=\eta\right\}.$$
\end{defn}
\begin{lemma}\label{344} If $A,B$ are cofinal subsets of $\lambda^+$, and $\theta<\lambda$,
then the following set is stationary:
$$S^{A,B}_{>\theta}:=\{\epsilon\in E^{\lambda^+}_{>\theta}\mid \exists(k_1,k_2,\eta)\in\omega\times\omega\times\epsilon\left( |B_{k_1,\eta}(\epsilon)|=|A_{k_2,\eta}(\epsilon)|=\lambda^+\right)\}.$$
\end{lemma}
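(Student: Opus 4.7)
The plan is to establish the stationarity of $S^{A,B}_{>\theta}$ by an elementary-submodel construction combined with pigeonhole on the lower traces of minimal walks. Given any club $E\s\lambda^+$, the goal is to exhibit $\epsilon\in E\cap S^{A,B}_{>\theta}$.

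To set up $\epsilon$, fix a sufficiently large regular $\chi$ and a regular $\kappa\in(\theta,\lambda)$, and take a continuous elementary chain $\langle N_\xi\mid \xi<\kappa\rangle$ of submodels of $H_\chi$ of size $\lambda$, each containing the parameters $\overrightarrow{C},\overrightarrow{e},A,B,\theta,E$. Setting $N:=\bigcup_\xi N_\xi$ and $\epsilon:=N\cap\lambda^+$, one has $\cf(\epsilon)=\kappa>\theta$ and $\epsilon\in E$. The key observation about walks from above is that for any $\alpha\in A\bks\epsilon$, the walk $\alpha=\alpha_0>\cdots>\alpha_k=\epsilon$ is finite with each $|C_{\alpha_i}|<\lambda$, and the maximum $\max_{\tau\in\tr(\epsilon,\alpha)}\sup(C_\tau\cap\epsilon)$ equals $\epsilon$ only when $\epsilon\in\acc(C_{\alpha_i})$ for some $i$, which forces $|C_{\alpha_i}|\ge\cf(\epsilon)$. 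Therefore, by choosing $\kappa$ larger than a uniform walk-size bound holding for $\lambda^+$-many $\alpha\in A$ and $\beta\in B$ (obtainable by an initial pigeonhole since walk-sizes lie in $\lambda$ while $|A|=|B|=\lambda^+$), one gets $\lambda^+$-many ``good'' $\alpha$ and $\beta$ whose lower trace lies strictly below $\epsilon$.

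A second pigeonhole, now on $\omega\times\epsilon$ of size $\le\lambda<\lambda^+$, produces popular fibers: pairs $(k_A,\eta_A)$ with $|A_{k_A,\eta_A}(\epsilon)|=\lambda^+$ and $(k_B,\eta_B)$ with $|B_{k_B,\eta_B}(\epsilon)|=\lambda^+$.

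The main obstacle is forcing $\eta_A=\eta_B$. A priori, these popular $\eta$-values could lie in disjoint subsets of $\epsilon$. The strategy is to locate a single $\gamma\ge\epsilon$ with $\epsilon\in C_\gamma$ and $\sup(C_\gamma\cap\epsilon)<\epsilon$ such that $\lambda^+$-many $\alpha\in A$ and $\lambda^+$-many $\beta\in B$ walk down to $\epsilon$ through $\gamma$ as their penultimate step; for such walks, $\eta^*:=\sup(C_\gamma\cap\epsilon)$ contributes to the maximum, and with a further refinement to ensure no earlier $C_{\alpha_i}$ contributes more than $\eta^*$, one matches $\eta_A=\eta_B=\eta^*$. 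Producing such a universal $\gamma$ is the delicate part: it interleaves elementarity of $N$, the coherence $e_\delta\s C_\alpha$ from Lemma \ref{32}, and the club-guessing property of $\overrightarrow{e}$ from Theorem \ref{31}, closely paralleling Eisworth's techniques in \cite{eisworth1}.
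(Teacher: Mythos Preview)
Your outline correctly isolates the central difficulty --- forcing the popular $\eta$-values for $A$ and for $B$ to coincide --- but the architecture you set up does not resolve it. There is first a circularity: the walk from $\alpha$ down to $\epsilon$, and hence the relevant ladder sizes $|C_{\alpha_i}|$, depend on $\epsilon$, which in turn depends on $\kappa$; you cannot pigeonhole on those sizes \emph{before} $\epsilon$ exists in order to choose $\kappa$. More seriously, having committed to $\epsilon=N\cap\lambda^+$, you then seek some $\gamma>\epsilon$ serving as a common penultimate step for $\lambda^+$-many $\alpha\in A$ and $\lambda^+$-many $\beta\in B$; but any such $\gamma$ lies outside $N$, so elementarity of $N$ says nothing about it, and the club-guessing data $\overrightarrow e$ is indexed by points of cofinality $\cf(\lambda)$, not by your $\epsilon$ of cofinality $\kappa$. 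No mechanism is actually given.

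The paper's proof runs in the opposite direction and avoids both problems. One does \emph{not} fix $\epsilon$ first. Let $S$ be the stationary set of $\delta\in E^{\lambda^+}_{\cf(\lambda)}$ carrying an elementary $M\prec\mathcal H(\chi)$ with $M\cap\lambda^+=\delta$ and with $\overrightarrow e$ guessing $E$ at $\delta$; let $D$ be its club of limit points; fix $\nu\in E^{\lambda^+}_{\cf(\lambda)}$ where $\overrightarrow e$ guesses $D$. Pick a \emph{single} $\beta\in B$ and a \emph{single} $\alpha\in A$ above $\nu$, walk both down to $\nu$, descend to a suitable $\varepsilon\in e_\nu\cap D$ of large cofinality, and then to some $\delta\in S$ with $\gamma'<\delta<\varepsilon$. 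The point is that $\tr(\delta,\alpha)$ and $\tr(\delta,\beta)$ share the tail $\tr(\delta,\varepsilon)$, hence share its minimal element $\zeta$. Only now is $\epsilon$ chosen, inside $e_\delta\cap E$ with $\cf(\epsilon)>\max\{|C_\zeta|,\theta\}$; the shared $\zeta$ then forces $\eta_\alpha=\eta_\beta=\sup(C_\zeta\cap\epsilon)<\epsilon$ automatically. Since $\eta,\epsilon<\delta=M\cap\lambda^+$ while $\alpha,\beta>\delta$, elementarity of $M$ --- not a pigeonhole over $A$ and $B$ --- yields $|A_{k_2,\eta}(\epsilon)|=|B_{k_1,\eta}(\epsilon)|=\lambda^+$. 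The two ideas you are missing: $\epsilon$ must sit \emph{below} the reflecting model rather than at its top, and the common $\eta$ is manufactured from a shared walk tail produced by two nested applications of club guessing.
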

\begin{proof} Suppose that $A,B$ are cofinal subsets of $\lambda^+$, $\theta<\lambda$,
and that $E$ is a club subset of $\lambda^+$.
We shall prove that $S^{A,B}_{>\theta}\cap E\not=\emptyset$.

Let $S$ denote the set of all $\delta\in E^{\lambda^+}_{\cf(\lambda)}$ for which there exists
an elementary submodel $M\prec\mathcal H(\chi)$ such that:\footnote{As usual, $\chi$ is some large enough regular cardinal which we fix throughout the whole paper,
$\mathcal H(\chi):=\langle H(\chi),\in,<_\chi\rangle$ is an hereditary fragment of the universe that admits a well-ordering $<_\chi$.}
\begin{itemize}
\item $\{\overrightarrow C,A,B\}\s M$;
\item $M\cap\lambda^+=\delta>\lambda$;
\item $\sup\{ \cf(\epsilon)\mid \epsilon\in e_\delta\cap E\}=\lambda$.
\end{itemize}

By the choice of the club guessing sequence $\overrightarrow e$, the set $S$ is stationary.
Put $D:=\{\varepsilon<\lambda^+\mid \sup(S\cap\varepsilon)=\varepsilon\}$.
Then $D$ is a club,
so let us fix some $\nu\in E^{\lambda^+}_{\cf(\lambda)}$ such that
$\sup\{ \cf(\varepsilon)\mid \varepsilon\in e_\nu\cap D\}=\lambda$.

Put $\beta:=\min(B\bks\nu+1)$, and $\alpha:=\min(A\bks\nu+1)$.
Let
\begin{itemize}
\item $\beta=\beta_0>\ldots>\beta_{n+1}=\nu$ denote the $\overrightarrow C$-walk from $\beta$ down to $\nu$;
\item $\alpha=\alpha_0>\ldots>\alpha_{m+1}=\nu$ denote the $\overrightarrow C$-walk from $\alpha$ down to $\nu$;
\item $\gamma:=\max(\{\sup(C_{\beta_i}\cap\nu)\mid i<n\}\cup \{\sup(C_{\alpha_j}\cap\nu)\mid j<m\}).$
\end{itemize}

Since $\nu\not\in C_{\beta_i}\cup C_{\alpha_j}$ for all $i<n$ and  $j<m$,
we get that $\gamma<\nu$, so let us pick some large enough
$\varepsilon\in e_\nu\cap D$ above $\gamma$ such that
$\cf(\varepsilon)>\max\{|C_{\beta_n}|,|C_{\alpha_m}|\}$.

\begin{claim}\label{c241} $\tr(\varepsilon,\beta)=\tr(\nu,\beta)$, and $\tr(\varepsilon,\alpha)=\tr(\nu,\alpha)$.
\end{claim}
\begin{proof}
As  $(\gamma,\nu)\cap C_{\beta_i}=\emptyset$
for all $i<n$, and $\varepsilon\in(\gamma,\nu)$, we get that
$\min(C_{\beta_i}\bks\varepsilon)=\min(C_{\beta_i}\bks\nu)$ for all $i<n$,
and hence $\Tr(\varepsilon,\beta)\bks\beta_n=\{\beta_i\mid i\le n\}=\tr(\nu,\beta)$.
Likewise, by $(\gamma,\nu)\cap C_{\alpha_j}=\emptyset$
for all $j<m$, we get that
$\Tr(\varepsilon,\alpha)\bks\alpha_m=\tr(\nu,\alpha)$.

Since $\nu\in
C_{\beta_n}\cap C_{\alpha_m}\cap E^{\lambda^+}_{\cf(\lambda)}$, we have $e_\nu\s
C_{\beta_n}\cap C_{\alpha_m}$, and in particular $\varepsilon\in C_{\beta_n}\cap C_{\alpha_m}$. So
$\rho(\varepsilon,\beta)=n+1$, $\rho(\varepsilon,\alpha)=m+1$,
and hence
$\tr(\varepsilon,\beta)=\tr(\nu,\beta)$, $\tr(\varepsilon,\alpha)=\tr(\nu,\alpha)$.
\end{proof}

Put $$\gamma':=\max\{\sup(C_{\tau}\cap\varepsilon)\mid
\tau\in\tr(\varepsilon,\beta)\cup\tr(\varepsilon,\alpha)\}.$$

Since $\gamma<\varepsilon<\nu$, we have $\gamma'=\max\{\gamma,\sup(C_{\beta_n}\cap\varepsilon),\sup(C_{\alpha_m}\cap\varepsilon)\}$.
Since $\varepsilon\in C_{\beta_n}\cap C_{\alpha_m}$ and $\cf(\varepsilon)>\max\{|C_{\beta_n}|,|C_{\alpha_m}|\}$, we
get that $\max\{\sup(C_{\beta_n}\cap\varepsilon),\sup(C_{\alpha_m}\cap\varepsilon)\}<\varepsilon$,
and hence  $\gamma'<\varepsilon$.
Since $\varepsilon\in e_\nu\cap D$, we now pick some large enough $\delta\in S$ satisfying $\gamma'<\delta<\varepsilon$,
and consider additional walks. Let:
\begin{itemize}
\item $\beta=\beta'_0>\ldots>\beta'_{n'+1}=\delta$ denote the $\overrightarrow C$-walk
from $\beta$ down to $\delta$;
\item $\alpha=\alpha'_0>\ldots>\alpha'_{m'+1}=\delta$ denote the $\overrightarrow C$-walk from $\alpha$ down to $\delta$.
\end{itemize}

\begin{claim} \begin{enumerate}
\item $\tr(\delta,\beta)=\tr(\delta,\varepsilon)\cup\tr(\varepsilon,\beta)$;
\item $\tr(\delta,\alpha)=\tr(\delta,\varepsilon)\cup\tr(\varepsilon,\alpha)$.
\end{enumerate}
\end{claim}
\begin{proof}
By definition of $\gamma'$ and since $\delta\in(\gamma',\varepsilon)$, we get that $\beta'_i=\beta_i$ for all $i\le n$,
and hence $\beta'_{n+1}=\varepsilon$. So $\tr(\varepsilon,\beta)=\{\beta'_0,\ldots\beta'_n\}$, and $\tr(\delta,\varepsilon)=\{\beta'_{n+1},\ldots,\beta'_{n'}\}$.

Likewise, $\alpha'_j=\alpha_j$ for all $j\le m$, and hence $\alpha'_{m+1}=\varepsilon$.
So $\tr(\varepsilon,\alpha)=\{\alpha'_0,\ldots,\alpha'_m\}$,
and $\tr(\delta,\varepsilon)=\{\alpha'_{m+1},\ldots,\alpha'_{m'}\}$.
\end{proof}
Next, denote
\begin{itemize}
\item  $\zeta:=\min(\tr(\delta,\varepsilon))$;
\item $\gamma_\alpha:=\max\{\sup(C_{\alpha'_j}\cap\delta)\mid j<m'\}$;
\item $\gamma_\beta:=\max\{\sup(C_{\beta'_i}\cap\delta)\mid i<n'\}$.
\end{itemize}
Evidently, $\gamma_\alpha<\delta$, $\gamma_\beta<\delta$. Note that by the preceding claim, we have $\beta'_{n'}=\alpha'_{m'}=\zeta$.

Pick a large enough $\epsilon\in e_\delta\cap E$ such that
$\sup(e_\delta\cap\epsilon)>\max\{\gamma_\alpha,\gamma_\beta\}$, and
$\cf(\epsilon)>\max\{|C_{\zeta}|,\theta\}$.

\begin{claim} $\tr(\epsilon,\beta)=\tr(\delta,\beta)$, and $\tr(\epsilon,\alpha)=\tr(\delta,\alpha)$.
\end{claim}
\begin{proof} As in the proof of Claim \ref{c241}.
\end{proof}

Denote:
\begin{itemize}
\item $\eta_\alpha:=\max\{\sup(C_\tau\cap\epsilon)\mid \tau\in\tr(\epsilon,\alpha)\}$;
\item $\eta_\beta:=\max\{\sup(C_\tau\cap\epsilon)\mid \tau\in\tr(\epsilon,\beta)\}$.
\end{itemize}

Since $\min(\tr(\epsilon,\beta))=\min(\tr(\delta,\beta))=\min(\tr(\delta,\varepsilon))=\zeta$,
and $\gamma_\beta<\epsilon<\delta$, we get that $\eta_\beta=\max\{\gamma_\beta,\sup(C_\zeta\cap\epsilon)\}$.
Since $\delta\in C_\zeta\cap E^{\lambda^+}_{\cf(\lambda)}$, we get that $\sup(C_\zeta\cap\epsilon)\ge\sup(e_\delta\cap\epsilon)>\gamma_\beta$,
and hence $\eta_\beta=\sup(C_\zeta\cap\epsilon)$.

By a similar consideration, we get that $\eta_\alpha=\sup(C_\zeta\cap\epsilon)$.

Denote $\eta:=\sup(C_\zeta\cap\epsilon)$.
Then, we have just established that
\begin{itemize}
\item $\beta\in B_{k_1,\eta}(\epsilon)$, for $k_1:=\rho(\epsilon,\beta)$, and
\item $\alpha\in A_{k_2,\eta}(\epsilon)$, for $k_2:=\rho(\epsilon,\alpha)$.
\end{itemize}

By the choice of $\delta$, let us now pick an elementary submodel $M\prec\mathcal H(\chi)$
such that $\{A,B,\overrightarrow C\}\s M$ and $M\cap\lambda^+=\delta$.

By $\eta\le\epsilon<\delta$,
the sets $B_{k_1,\eta}(\epsilon)$ and $A_{k_2,\eta}(\epsilon)$ are definable from parameters in $M$,
and hence belongs to $M$. Since $|M|=\lambda$ and $\beta\in B_{k_1,\eta}(\epsilon)\bks M$,
we get that $|B_{k_1,\eta}(\epsilon)|=\lambda^+$.
Likewise, $|A_{k_2,\eta}(\epsilon)|=\lambda^+$.

By the choice of $\epsilon$,
we have $\cf(\epsilon)>|C_\zeta|$, and hence $\eta<\epsilon$.
Altogether,  $\epsilon\in S^{A,B}_{>\theta}$.
Recalling that $\epsilon\in e_\delta\cap E$, we conclude that
$S^{A,B}_{>\theta}\cap E\not=\emptyset$.
\end{proof}

\begin{thm}\label{t25} For every cofinal subsets $A,B$ of $\lambda^+$, and every $\theta<\lambda$,
there exists a stationary subset $S^*\s E^{\lambda^+}_{>\theta}$ such that $\rts[A\circledast B]\supseteq [S^*]^2$.
\end{thm}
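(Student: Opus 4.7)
The plan combines Lemma~\ref{344} with a pigeonhole/Fodor refinement and then a delicate selection of $(\alpha,\beta)$ using an elementary submodel and the scale. By Lemma~\ref{344}, the set $S^{A,B}_{>\theta}$ is stationary. For each $\epsilon\in S^{A,B}_{>\theta}$, fix a witnessing triple $(k_1(\epsilon),k_2(\epsilon),\eta(\epsilon))$. Since $\omega\times\omega$ is countable, some stationary subset has $(k_1,k_2)$ constantly equal to some $(k_1^*,k_2^*)$, and Fodor's lemma applied to the regressive $\eta$ further refines to a stationary $S_0$ on which $\eta\equiv\eta^*$. Intersecting $S_0$ with the club of $\delta$'s realizable as $M_\delta\cap\lambda^+$ for some elementary submodel $M_\delta\prec\mathcal{H}(\chi)$ of size $\lambda$ containing $\{\overrightarrow C,\overrightarrow f,\overrightarrow\lambda,\psi,A,B,S_0,k_1^*,k_2^*,\eta^*\}$ and satisfying $\chi_{M_\delta}\in\prod\overrightarrow\lambda$ (where $\chi_{M_\delta}(i):=\sup(M_\delta\cap\lambda_i)$) produces the desired stationary $S^*\subseteq E^{\lambda^+}_{>\theta}$.

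Fix $\alpha^*<\beta^*$ in $S^*$. A direct unpacking of the definition of $\rts$ shows that to realize $\rts(\alpha,\beta)=(\alpha^*,\beta^*)$ it suffices to find $\alpha\in A_{k_2^*,\eta^*}(\alpha^*)\cap(\eta^*,\beta^*)$ and $\beta\in B_{k_1^*,\eta^*}(\beta^*)$ with $\beta>\beta^*$ and $\psi(\Delta(\alpha,\beta))=(k_1^*,k_2^*)$. Indeed, the inequality $\sup(C_\tau\cap\beta^*)\le\eta^*<\alpha$ valid for every $\tau\in\tr(\beta^*,\beta)$ forces the $\overrightarrow C$-walk from $\beta$ down to $\alpha$ to share its first $k_1^*+1$ terms with the walk from $\beta$ down to $\beta^*$, so that $\beta_{k_1^*}=\beta^*$ and the internal $\eta$ computed by $\rts$ equals $\eta^*$. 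Symmetrically, $\alpha\in A_{k_2^*,\eta^*}(\alpha^*)$ combined with $\alpha^*>\eta^*$ (automatic since $\alpha^*$ is a limit of cofinality $>\theta$) gives $\alpha_{k_2^*}=\alpha^*$ in the walk from $\alpha$ down to $\eta^*+1$. The condition $\psi(\Delta(\alpha,\beta))=(k_1^*,k_2^*)$ supplies the correct indices $(k_1,k_2)$, yielding $\rts(\alpha,\beta)=(\alpha^*,\beta^*)$.

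Set $M:=M_{\beta^*}$. The first two desiderata are handled by elementarity: since $\alpha^*\in S_0\cap\beta^*\subseteq M$, the set $A_{k_2^*,\eta^*}(\alpha^*)$ belongs to $M$, and $M\models|A_{k_2^*,\eta^*}(\alpha^*)|=\lambda^+$ gives that $A_{k_2^*,\eta^*}(\alpha^*)\cap M=A_{k_2^*,\eta^*}(\alpha^*)\cap\beta^*$ is unbounded in $\beta^*$, supplying a legitimate $\alpha\in(\alpha^*,\beta^*)\subseteq(\eta^*,\beta^*)$. For $\beta$, any element of the size-$\lambda^+$ set $B_{k_1^*,\eta^*}(\beta^*)\setminus M$ works.

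The main obstacle is arranging $\Delta(\alpha,\beta)\in J:=\psi^{-1}(k_1^*,k_2^*)$. For this I would exploit the characteristic function $\chi_M$ together with the cofinality of $\{f_\beta\mid\beta\in B_{k_1^*,\eta^*}(\beta^*)\}$ in the scale and the cofinality of $J$ in $\cf(\lambda)$. Concretely, picking $\beta\in B_{k_1^*,\eta^*}(\beta^*)\setminus M$ with $f_\beta$ above $\chi_M$ on a suitably long tail, and choosing $j^*\in J$ past this tail's threshold, one arrives at a situation where $f_\beta(i)>\chi_M(i)\ge f_\alpha(i)$ for all $i>j^*$ and all $\alpha\in M$; then an elementarity/density argument inside $M$ at the coordinate $j^*\in\cf(\lambda)\subseteq M$ produces $\alpha\in A_{k_2^*,\eta^*}(\alpha^*)\cap M$ realizing $f_\alpha(j^*)\ge f_\beta(j^*)$, so that $\Delta(\alpha,\beta)=j^*\in J$. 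This last, most delicate step is the one that closely parallels the scale-theoretic manipulations of Eisworth's \cite{eisworth1}.
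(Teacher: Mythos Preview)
Your reduction via Lemma~\ref{344} and Fodor, the definition of $S^*$, and the analysis of walks explaining why it suffices to find $\alpha\in A_{k_2^*,\eta^*}(\alpha^*)\cap(\eta^*,\beta^*)$ and $\beta\in B_{k_1^*,\eta^*}(\beta^*)\setminus(\beta^*+1)$ with $\psi(\Delta(\alpha,\beta))=(k_1^*,k_2^*)$ are all correct and match the paper. The gap is in your final paragraph, precisely at the step you flag as ``most delicate.''

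You set $M:=M_{\beta^*}$ and propose to use its characteristic function $\chi_M$. But $M_{\beta^*}$ has size $\lambda$ and satisfies $M_{\beta^*}\cap\lambda^+=\beta^*>\lambda$; in particular $\lambda_i\subseteq M_{\beta^*}$ for every $i<\cf(\lambda)$, so $\sup(M_{\beta^*}\cap\lambda_i)=\lambda_i$ and $\chi_{M_{\beta^*}}\notin\prod\overrightarrow\lambda$. (Your earlier requirement that $\chi_{M_\delta}\in\prod\overrightarrow\lambda$ is therefore vacuous, and no $\beta$ can have $f_\beta$ eventually above $\chi_{M_{\beta^*}}$.) The inequality $f_\beta(i)>\chi_M(i)\ge f_\alpha(i)$ that you need to pin down $\Delta(\alpha,\beta)$ simply cannot be arranged with this choice of $M$.

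The paper's remedy is to introduce a \emph{second, small} model: the Skolem hull $M$ of $\{(\overrightarrow\lambda,\overrightarrow f),\overrightarrow C,A,\eta,\alpha^*\}\cup(\cf(\lambda)+1)$, which has size $\cf(\lambda)<\lambda$ and hence a genuine characteristic function $\ch_M^{\overrightarrow\lambda}$. The point is that this small $M$ is an \emph{element} of $M_{\beta^*}$ (being definable from parameters in $M_{\alpha^*+1}$), so $\ch_M^{\overrightarrow\lambda}\in M_{\beta^*}$ and elementarity yields $\ch_M^{\overrightarrow\lambda}<^*f_\delta$ for some $\delta<\beta^*$, whence $\ch_M^{\overrightarrow\lambda}<^*f_\beta$ for any $\beta\in B_{k_1^*,\eta^*}(\beta^*)$ above $\beta^*$. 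One then picks $i'\in\psi^{-1}(k_1^*,k_2^*)$ past the threshold, enlarges to $N:=\sk(M\cup\lambda_{i'})$ so that the target value $f_\beta(i')<\lambda_{i'}$ lies in $N$, and finds $\alpha\in A_{k_2^*,\eta^*}(\alpha^*)\cap N$ with $f_\alpha(i')>f_\beta(i')$ by elementarity. Baumgartner's lemma gives $\ch_N^{\overrightarrow\lambda}(i)=\ch_M^{\overrightarrow\lambda}(i)$ for $i>i'$, which is what forces $f_\alpha(i)<f_\beta(i)$ there and hence $\Delta(\alpha,\beta)=i'$. Your sketch conflates the roles of the large model $M_{\beta^*}$ (used only to ensure $\alpha<\beta^*$) and this small hull (used to control the scale); without the latter, the argument does not go through.
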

\begin{proof}
Suppose that $A,B$ are cofinal subsets of $\lambda^+$, and that $\theta<\lambda$.
By Lemma \ref{344}, $S^{A,B}_{>\theta}$ is stationary, so we appeal to Fodor's
lemma and fix some $k_1,k_2<\omega$ and $\eta<\lambda^+$ such that the
following set is stationary:
$$S:=\{\epsilon\in E^{\lambda^+}_{>\theta}\mid |B_{k_1,\eta}(\epsilon)|=|A_{k_2,\eta}(\epsilon)|=\lambda^+\}.$$

Next, let $\langle M_i\mid i<\lambda^+\rangle$ be a continuous
$\in$-chain of elementary submodels of $\mathcal H(\chi)$ such that:
\begin{enumerate}
\item $\{\overrightarrow \lambda,\overrightarrow f,\overrightarrow C,A,\eta\}\s  M_0$;
\item $|M_i|=\lambda$ for all $i<\lambda$;
\end{enumerate}

Let $S^*$ be the following stationary subset of $E^{\lambda^+}_{>\theta}$:
$$S^*:=\{ \epsilon\in S\mid M_\epsilon\cap\lambda^+=\epsilon\}.$$

\begin{claim}\label{c251} For every $\alpha^*<\beta^*$ from $S^*$,
there exists $\alpha\in A$, $\beta\in B$ such that all of the followings holds:
\begin{enumerate}
\item $\eta<\alpha^*<\alpha<\beta^*<\beta$;
\item $\rho(\beta^*,\beta)=k_1$;
\item $\rho(\alpha^*,\alpha)=k_2$;
\item $\psi(\Delta(\alpha,\beta))=(k_1,k_2)$;
\item $\max\{\sup(C_\tau\cap\beta^*)\mid \tau\in \tr(\beta^*,\beta)\}=\eta$;
\item $\max\{\sup(C_\tau\cap\alpha^*)\mid \tau\in \tr(\alpha^*,\alpha)\}=\eta$;
\end{enumerate}
\end{claim}
\begin{proof}
As $\beta^*\in S$, let us pick some $\beta\in B_{k_1,\eta}(\beta^*)$ with $\beta^*<\beta$.
Next, let $M=\sk_{\mathcal H(\chi)}(x)$ denote the Skolem hull in $\mathcal H(\chi)$ of the set $x:=\{(\overrightarrow\lambda,\overrightarrow f),\overrightarrow
C,A,\eta,\alpha^*\}\cup\cf(\lambda)+1$. As $|M|<\lambda=\sup(\overrightarrow\lambda)$,
it is now reasonable to consider the characteristic function of $M$ on $\overrightarrow\lambda$.
We remind the reader that the latter, which we denote by $\ch^{\overrightarrow\lambda}_M$, stands for the unique
 element of $\prod\overrightarrow\lambda$ that satisfies for all $i<\cf(\lambda)$:
$$\ch^{\overrightarrow\lambda}_M(i)=\begin{cases}\sup(M\cap\lambda_i),&\sup(M\cap\lambda_i)<\lambda_i\\
0,&\text{otherwise}\end{cases}.$$

Next, note that $M\in M_{\beta^*}$.
Indeed, as $x\s M_{\alpha^*+1}\prec\mathcal H(\chi)$, we get that
$\sk_{\mathcal H(\chi)}(x)=\sk_{M_{\alpha^*+1}}(x)$. So, $M$ is definable from $x$ and $M_{\alpha^*+1}$,
and hence $M\in M_{\alpha^*+2}\s M_{\beta^*}$.

By $M\in M_{\beta^*}$, we get that $\ch^{\overrightarrow\lambda}_M\in M_{\beta^*}$, and hence
$$M_{\beta^*}\models \exists\delta<\lambda^+(\ch^{\overrightarrow\lambda}_M<^*f_\delta).$$
As $M_{\beta^*}\cap\lambda^+=\beta^*$, we may find some
$\delta<\beta^*$ such that $\ch^{\overrightarrow\lambda}_M<^*f_\delta$.
By $\delta<\beta^*<\beta$, we infer that $\ch^{\overrightarrow\lambda}_M<^*f_\beta$.

Denote $A':=A_{k_2,\eta}(\alpha^*)\bks(\alpha^*+1)$.
For all $i<\cf(\lambda)$, put $\Gamma_i:=\{f_\alpha(i)\mid
\alpha\in A'\}$. As $A'$ is cofinal in $\lambda^+$, $\langle
f_\alpha\mid \alpha\in A'\rangle$ is cofinal in
$\langle\prod\overrightarrow\lambda,<^*\rangle$, and hence
$\sup(\Gamma_i)=\lambda_i$ for co-boundedly many $i<\cf(\lambda)$.
Thus, let us pick some large enough ${i'}<\cf(\lambda)$ such that:
\begin{itemize}
\item $\ch^{\overrightarrow\lambda}_M(i)<f_\beta(i)$ whenever $i'<i<\cf(\lambda)$;
\item $\sup(\Gamma_{{i'}})=\lambda_{{i'}}>|M|$.
\item $\psi(i')=(k_1,k_2)$.
\end{itemize}

Put $\varepsilon:=f_\beta(i')$. As $\varepsilon\in\lambda_{i'}$,
we define $N:=\sk_{\mathcal H(\chi)}(M\cup\lambda_{i'})$.
By $x\s M\s N$, we have
$\{\overrightarrow C,A,k_2,\eta,\alpha^*,\varepsilon\}\s N$.
So $A'\in N$, and
$$N\models \exists\alpha\in A'\left(f_\alpha(i')>\varepsilon\right).$$

 Thus, let us pick some $\alpha\in  A'\cap N$ such that
$f_\alpha(i')>\varepsilon$. Since $\alpha\in N\s M_{\beta^*}$, we
have $\alpha<\beta^*$.

So $\alpha\in A_{k_2,\eta}(\alpha^*)$, $\beta\in B_{k_1,\eta}(\beta^*)$,
$\eta<\alpha^*<\alpha<\beta^*<\beta$, and hence we are left with verifying that $\psi(\Delta(\alpha,\beta))=(k_1,k_2)$.

\begin{sclaim}\label{c251} Let $\ch_N^{\overrightarrow \lambda}$ denote the characteristic function of $N$ on $\overrightarrow\lambda$.
Then $\ch^{\overrightarrow\lambda}_N(i)=\ch^{\overrightarrow\lambda}_M(i)$ whenever $i'<i<\cf(\lambda)$.
\end{sclaim}
\begin{proof} This is an instance of Baumgartner's lemma \cite{baum},
stating that if $M\preceq\mathcal H(\chi)$ is some structure, and $\kappa<\mu$ are regular cardinals in $M$,
then $\sup(N\cap\mu)=\sup(M\cap\mu)$, for $N:=\sk_{\mathcal H(\chi)}(M\cup\kappa)$.
\end{proof}

Next, note that for all $i$ with $i'<i<\cf(\lambda)$, we have:
\begin{itemize}
\item $f_\alpha(i)\le\ch^{\overrightarrow\lambda}_N(i)$, since $\alpha\in N$;
\item $\ch^{\overrightarrow\lambda}_N(i)=\ch^{\overrightarrow\lambda}_M(i)$, by the preceding subclaim;
\item $\ch^{\overrightarrow\lambda}_M(i)<f_\beta(i)$, by $i>i'$ and the choice of $i'$.
\end{itemize}

So $f_\alpha(i)<f_\beta(i)$ whenever $i<i'<\cf(\lambda)$, and hence $\Delta(\alpha,\beta)\le i'$.
By $f_\alpha(i')>\varepsilon=f_\beta(i')$, we also get that $\Delta(\alpha,\beta)\ge i'$.

It follows that $\Delta(\alpha,\beta)= i'$, and hence $\psi(\Delta(\alpha,\beta))=\psi(i')=(k_1,k_2)$.
\end{proof}

\begin{claim} $[S^*]^2\s\rts``[A\otimes B]$.
\end{claim}
\begin{proof}
Suppose that $(\alpha^*,\beta^*)$ is a given element of
$[S^*]^2$. Fix $\alpha\in A$, $\beta\in B$
as in  Claim \ref{c251}, and let us show that
$\rts(\alpha,\beta)=(\alpha^*,\beta^*)$.

Let:
\begin{itemize}
\item $\beta_0>\ldots>\beta_{n}$ denote the
$\overrightarrow C$-walk from $\beta$ down to $\alpha$;
\item $\beta'_0>\ldots>\beta'_{k_1}$ denote the
$\overrightarrow C$-walk from $\beta$ down to $\beta^*$;
\item $\alpha_0>\ldots>\alpha_{m}$ denote the
$\overrightarrow C$-walk from $\alpha$ down to $\eta+1$;\footnote{Notice that by the choice of $\beta$,
we have $\eta<\alpha^*<\alpha$, and hence  $\alpha>\eta+1$.}
\item $\alpha'_0>\ldots>\alpha'_{k_2}$ denote the
$\overrightarrow C$-walk from $\alpha$ down to $\alpha^*$.
\end{itemize}

By $\beta_0'=\beta_0=\beta$ and $\sup(C_{\beta'_i}\cap\beta^*)\le \eta<\alpha<\beta^*$ for all $i<k_1$,
we get that $\beta_{i+1}'=\min(C_{\beta_{i'}}\bks \beta^*)=\min(C_{\beta_i}\bks\alpha)=\beta_{i+1}$ for all $i<k_1$.
In particular, $n\ge k_1$, $\beta_{k_1}=\beta^*$, and
$$\eta=\max\{\sup(C_{\tau}\cap\beta_{k_1})\mid\tau\in\tr(\beta_{k_1},\beta)\}.$$

By $\alpha_0'=\alpha_0$ and $\sup(C_{\alpha'_j}\cap\alpha^*)\le \eta<\eta+1\le\alpha^*$ for all $j<k_2$,
we get that $\alpha_{j+1}'=\min(C_{\alpha_{j'}}\bks\alpha^*)=\min(C_{\alpha_j}\bks\eta+1)=\alpha_{j+1}$ for all $j<k_2$.
In particular, $m\ge k_2$ and $\alpha_{k_2}=\alpha^*$.

It now follows from $\psi(\Delta(\alpha,\beta))=(k_1,k_2)$ and
the definition of $\rts$, that $\rts(\alpha,\beta)=(\alpha^*,\beta^*)$.
\end{proof}
\end{proof}

\subsection{The countable cofinality case}
In this subsection, we prove Theorem \ref{t12}, for the case that $\lambda$ is a singular cardinal of countable cofinality.
Here, instead of appealing to Theorem \ref{31}, we shall make use of Eiswroth's theorem concerning \emph{off-center club guessing}.

\begin{thm}[Eisworth, \cite{eisworth4}] Suppose that $\lambda>\cf(\lambda)=\omega$.

Then there exists an increasing sequence of successor cardinals
$\overrightarrow \mu=\langle \mu_m\mid m<\omega\rangle$, and a matrix
$\overrightarrow e=\langle e_\delta^m \mid \delta\in E^{\lambda^+}_{\omega_1}, m<\omega\rangle$ such that:
\begin{enumerate}
\item $\{ e^m_\delta\mid m<\omega\}$ is an increasing chain of club subsets of $\delta$;
\item $|e^m_\delta|\le\mu_m$ for all $\delta$ and $m$;
\item for every club $E\s\lambda^+$, there exist stationarily many $\delta\in E^{\lambda^+}_{\omega_1}$
such that  $\sup(e^m_\delta\cap E\cap E^{\delta}_{>\mu_m})=\delta$ for all $m<\omega$;
\item $\sup\overrightarrow\mu=\lambda$.
\end{enumerate}
\end{thm}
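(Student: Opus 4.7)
The plan is to combine Shelah's standard club guessing at cofinality $\omega_1$ with a countably-indexed enrichment procedure that inserts high-cofinality approach points. I begin by fixing any strictly increasing $\omega$-sequence $\overrightarrow\mu=\langle \mu_m\mid m<\omega\rangle$ of successor cardinals, cofinal in $\lambda$, with $\mu_0>\omega_1$; this secures clause (4) and the base of clause (2). Since $\omega_1$ is regular with $\omega_1^+<\lambda$, Shelah's club guessing theorem applied at cofinality $\omega_1$ (a setup that does not require $\cf(\lambda)>\omega$) furnishes a sequence $\overrightarrow c=\langle c_\delta\mid \delta\in E^{\lambda^+}_{\omega_1}\rangle$ with each $c_\delta$ a club in $\delta$ of order-type $\omega_1$, and with the guessing property that for every club $E\s\lambda^+$, there are stationarily many $\delta$ for which $\acc(E)\cap c_\delta$ is cofinal in $\delta$.

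The enrichment then goes as follows. For each $\delta\in E^{\lambda^+}_{\omega_1}$, enumerate $c_\delta$ as $\langle \gamma^\delta_i\mid i<\omega_1\rangle$, and for each successor index $i$ and each $m<\omega$, fix a set $f^m_{\delta,i}\s(\gamma^\delta_{i-1},\gamma^\delta_i)$ cofinal in $\gamma^\delta_i$, of size at most $\mu_m$, chosen so that each of its elements has cofinality strictly greater than $\mu_m$ whenever the interval is long enough to permit such a choice. Let $e^m_\delta$ be the closure in $\delta$ of $c_\delta\cup\bigcup\{f^k_{\delta,i}\mid k\le m,\ i<\omega_1\text{ successor}\}$. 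Clause (1) holds by construction, and the size bound $|e^m_\delta|\le \omega_1\cdot \mu_m=\mu_m$ secures clause (2).

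The heart of the argument is clause (3). Given a club $E\s\lambda^+$, refine it to a club $E^*\s E$ consisting of those $\alpha$ for which $E\cap E^\alpha_{>\mu_m}$ is cofinal in $\alpha$ for every $m<\omega$; this is a club, being a countable intersection of clubs, using that $E^{\lambda^+}_{>\mu_m}$ is stationary for every $m$ (since $\mu_m<\lambda$ and $\cf(\lambda)=\omega$, so unboundedly many regular cardinals lie strictly between $\mu_m$ and $\lambda$). Apply the guessing for $\overrightarrow c$ to $E^*$, obtaining stationarily many $\delta$ with $\acc(E^*)\cap c_\delta$ cofinal in $\delta$. For each such $\delta$ and each $m$, the construction allows the sets $f^m_{\delta,i}$, for $i$ ranging over a cofinal set of successor indices with $\gamma^\delta_i\in E^*$, to be placed inside $E\cap E^{\gamma^\delta_i}_{>\mu_m}$, which in turn forces $e^m_\delta\cap E\cap E^\delta_{>\mu_m}$ to be cofinal in $\delta$ simultaneously for every $m$.

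The main obstacle will be arranging the $f^m_{\delta,i}$ so that the size constraint $|e^m_\delta|\le\mu_m$ is preserved at every level, while forcing the approach points into $E\cap E^{\gamma^\delta_i}_{>\mu_m}$ for all $m<\omega$ at the same $\delta$. The naive construction, which would tailor $f^m_{\delta,i}$ to $E$ after the fact, is inadmissible because the matrix $\overrightarrow e$ must be fixed in advance of $E$; the fix, in the spirit of Eisworth's off-center methodology, is to define $f^m_{\delta,i}$ using only the pre-fixed parameters $\overrightarrow \mu$ and $\overrightarrow c$, and then invoke an elementary-submodel reflection argument akin to that of Lemma \ref{344} to show that, for the $\delta$ produced by guessing, the already-fixed sets automatically meet $E$ at high-cofinality points cofinally in $\delta$, for every $m$ simultaneously --- with the countable index $m$ being the crucial feature that lets a single reflection step handle all levels at once.
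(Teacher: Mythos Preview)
First, note that the paper does not supply its own proof of this theorem: it is quoted from Eisworth \cite{eisworth4} and used as a black box, so there is no in-paper argument to compare against.

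Your proposal has a genuine gap, and in fact you name it yourself. The matrix $\overrightarrow e$ must be fixed once and for all, before the club $E$ is revealed; yet the only way your enrichment sets $f^m_{\delta,i}$ could possibly land inside $E\cap E^{\gamma^\delta_i}_{>\mu_m}$ is if they were chosen with $E$ in hand. You acknowledge this is ``the main obstacle'' and then assert that an elementary-submodel reflection argument ``akin to Lemma~\ref{344}'' will force the already-fixed points to meet $E$. That is not how such reflection arguments work: Lemma~\ref{344} uses elementarity to conclude that a \emph{definable} set has size $\lambda^+$ because it has a member outside the model. It cannot force finitely many pre-chosen ordinals (the elements of $f^m_{\delta,i}$) to lie in an arbitrary club $E$ handed to you afterward. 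A one-shot construction of this sort cannot succeed; no amount of reflection will put a specific ordinal into a club that was not consulted when the ordinal was selected.

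The actual proof in \cite{eisworth4} is not a direct construction but an iterative refinement of length $\omega_2$: one starts with a tentative matrix, and whenever some club $E$ witnesses failure of clause~(3), one uses $E$ to thin each $e^m_\delta$ (replacing it, roughly, by $e^m_\delta\cap \acc(E)$, while re-inflating with high-cofinality points drawn from $E$). A counting argument shows this process must stabilize before stage $\omega_2$, and the stable matrix is the desired object. Your sketch contains neither the iteration nor the stabilization argument, and the appeal to ``Eisworth's off-center methodology'' in the final paragraph is a label for precisely the machinery that is missing.
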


The next lemma is analogous to that of  Lemma \ref{32}.
\begin{lemma}[Eisworth, \cite{eisworth4}] Let  $\lambda$ and $\overrightarrow e,\overrightarrow\mu$ be as in the preceding theorem.
Then there exists a matrix $\overrightarrow C=\langle C^m_\alpha\mid \alpha<\lambda^+, m<\omega\rangle$ such that for every
$\alpha<\lambda^+$:
\begin{itemize}
\item $\max(C^m_{\alpha+1})=\alpha$ for all $m<\omega$;
\item $\{C^m_\alpha\mid m<\omega\}$ is a chain of club subsets of $\alpha$;
\item $|C^m_\alpha|\le\max\{\mu_m,\cf(\alpha)\}$;
\item if $\delta\in C^m_\alpha\cap E^{\lambda^+}_{\omega_1}$, then $e^m_\delta\s C^m_\alpha$.
\end{itemize}
\end{lemma}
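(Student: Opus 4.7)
The plan is to mimic the construction underlying Lemma \ref{32}, adapted to the countable-cofinality setting while respecting the chain structure in $m$. I would build $\overrightarrow C$ by recursion on $\alpha<\lambda^+$. At each $\alpha$, the idea is to first fix once and for all a seed set $s_\alpha\s\alpha$ that depends only on $\alpha$ (not on $m$), and then, for each $m$ separately, define $C^m_\alpha$ as the result of iteratively closing $s_\alpha$ topologically in $\alpha$ and absorbing $e^m_\delta$ for every $\delta\in E^{\lambda^+}_{\omega_1}$ that belongs to the current set.

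Concretely, I would take $s_{\beta+1}:=\{\beta\}$ for successors; $s_\alpha:=e^0_\alpha$ for $\alpha$ a limit of cofinality $\omega_1$; and $s_\alpha$ any fixed cofinal sequence in $\alpha$ of order type $\cf(\alpha)$ for other limits. Then set $T^{m,0}_\alpha:=s_\alpha$, let $T^{m,\xi+1}_\alpha$ be the topological closure in $\alpha$ of $T^{m,\xi}_\alpha\cup\bigcup\{e^m_\delta\mid \delta\in T^{m,\xi}_\alpha\cap E^{\lambda^+}_{\omega_1}\}$, and take unions at limit stages $\xi$. The iteration stabilizes at some $\xi^*<\lambda^+$, and I would put $C^m_\alpha:=T^{m,\xi^*}_\alpha$.

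The four required properties are then verified as follows. The max condition at $\alpha=\beta+1$ is immediate since nothing above $\beta$ ever enters the construction. Club-ness and absorption follow directly from the construction: at the stable stage $\xi^*$ the set $C^m_\alpha$ is topologically closed in $\alpha$ and satisfies $e^m_\delta\s C^m_\alpha$ for every $\delta\in C^m_\alpha\cap E^{\lambda^+}_{\omega_1}$. The size bound $|C^m_\alpha|\le\max\{\mu_m,\cf(\alpha)\}$ is maintained because $|s_\alpha|\le\max\{1,\cf(\alpha)\}$, each successor step adds at most $|T^{m,\xi}_\alpha|\cdot\mu_m$ new elements, and topological closure does not increase cardinality. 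Finally, the chain property $C^m_\alpha\s C^{m+1}_\alpha$ is established by a parallel induction on $\xi$: if $T^{m,\xi}_\alpha\s T^{m+1,\xi}_\alpha$, then the topological closures are nested, and for every relevant $\delta$ one has $e^m_\delta\s e^{m+1}_\delta\s T^{m+1,\xi+1}_\alpha$ by condition~(1) of the preceding theorem.

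The main delicate point is controlling the cascade whereby topological closure may introduce new limit points of cofinality $\omega_1$ that themselves demand further absorption of $e^m$: one must verify that this cascade terminates within the desired cardinality bound. Stabilization at some $\xi^*\le(\max\{\mu_m,\cf(\alpha)\})^+$ follows from the observation that the total pool of ordinals whose $e^m$-value ever enters the construction is bounded by the final cardinality of $C^m_\alpha$, so a standard cardinal-arithmetic bookkeeping argument forces the iteration to close off well within the required size.
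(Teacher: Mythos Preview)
The paper does not prove this lemma; it is attributed to Eisworth \cite{eisworth4} and simply quoted, so there is no ``paper's own proof'' to compare against. Your construction is the standard one and is essentially correct, and the verifications of the max condition, club-ness, swallowing, and the chain property $C^m_\alpha\subseteq C^{m+1}_\alpha$ are fine.

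The one genuine soft spot is your stabilization/size argument. As written it is circular: you bound the ``total pool of ordinals whose $e^m$-value ever enters'' by the final cardinality of $C^m_\alpha$, which is precisely the quantity you are trying to control. Saying the iteration halts by stage $(\max\{\mu_m,\cf(\alpha)\})^+$ is not enough either, since a union of length $\kappa^+$ of sets of size $\le\kappa$ could have size $\kappa^+$. The clean fix is that your iteration already stabilizes at stage $\omega+1$. Indeed, set $\kappa:=\max\{\mu_m,\cf(\alpha)\}$; by induction $|T^{m,n}_\alpha|\le\kappa$ for all $n<\omega$, so $|T^{m,\omega}_\alpha|\le\aleph_0\cdot\kappa=\kappa$. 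Every $\delta\in T^{m,\omega}_\alpha\cap E^{\lambda^+}_{\omega_1}$ lies in some $T^{m,n}_\alpha$, hence $e^m_\delta\subseteq T^{m,n+1}_\alpha\subseteq T^{m,\omega}_\alpha$; thus $T^{m,\omega+1}_\alpha=\mathrm{cl}_\alpha(T^{m,\omega}_\alpha)$. Any point of $\mathrm{cl}_\alpha(T^{m,\omega}_\alpha)\setminus T^{m,\omega}_\alpha$ must have cofinality $\omega$: a limit point of cofinality $\ge\omega_1$ of $\bigcup_{n<\omega}T^{m,n}_\alpha$ is, by regularity of $\omega_1>\omega$, already a limit point of a single $T^{m,n}_\alpha$, hence lies in the closed set $T^{m,n+1}_\alpha\subseteq T^{m,\omega}_\alpha$. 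Consequently $T^{m,\omega+1}_\alpha$ contains no new points of cofinality $\omega_1$, so $T^{m,\omega+2}_\alpha=T^{m,\omega+1}_\alpha$ and $|C^m_\alpha|=|T^{m,\omega+1}_\alpha|\le\kappa$. Replacing your last paragraph with this observation makes the proof complete.
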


From now on, we fix a cardinal $\lambda$ and sequences
$\overrightarrow e,\overrightarrow C, \overrightarrow \mu$ as above.

\begin{notation} if $\sigma\in{}^{<\omega}\omega$ is a non-empty sequence and $m<\omega$,
we define $\ell(\sigma):=\dom(\sigma)$, $\sigma^\frown  m$ as $\sigma\cup\{(\ell(\sigma),m)\}$,
and $\lift(\sigma)$ as the unique element of ${}^\omega\omega$ satisfying for all $n<\omega$,
$$\lift(\sigma)(n)=\begin{cases}\sigma(n),&n\in\dom(\sigma)\\
\sigma(\max(\dom(\sigma)),&\text{otherwise}\end{cases}.$$
\end{notation}

In this subsection, we shall be interested in conducting \emph{generalized walks}:
for $\alpha<\beta<\lambda^+$ and $\sigma\in{}^{<\omega}\omega$, let $\beta_0:=\beta$, and $\beta_{n+1}:=\min(C^{\lift(\sigma)(n)}_{\beta_n}\bks\alpha)$
whenever $n<\omega$ and $\beta_n>\alpha$.
The outcome $\beta=\beta_0>\ldots>\beta_{k+1}=\alpha$ is considered as \emph{the
$(\overrightarrow C,\sigma)$-walk from $\beta$ down to $\alpha$}, and  we denote:
\begin{itemize}
\item $\rho_\sigma(\alpha,\beta):=k+1$;
\item $\trs(\alpha,\beta):=\{ (\lift(\sigma)(i),\beta_i)\mid i\le k\}$;
\item $\Tr_\sigma(\alpha,\beta):=\{ (\lift(\sigma)(i),\beta_i)\mid i\le k+1\}$.
\end{itemize}

Fix a surjection $\psi:\omega\rightarrow{}^{<\omega}\omega\times{}^{<\omega}\omega$ such that the pre-image of every element is infinite.
Let $(\overrightarrow\lambda,\overrightarrow f)$ be
a scale for $\lambda$, and let, as in the previous subsection, for all $\alpha<\beta<\lambda^+$:
$$\Delta(\alpha,\beta):=\sup\{ i<\cf(\lambda)\mid f_\alpha(i)\ge f_\beta(i)\}.$$

For $\alpha<\beta<\lambda^+$, we now define $\rts(\alpha,\beta)$. Let $\sigma_1,\sigma_2$ be such that
$\psi(\Delta(\alpha,\beta))=(\sigma_1,\sigma_2)$.
Next, let:
\begin{itemize}
\item $\beta_0>\dots>\beta_{n}$ denote the
$(\overrightarrow{C},\sigma_1)$-walk from $\beta$ down to $\alpha$;
\item $\eta:=\max\{\sup(C^i_{\tau}\cap\beta_{\ell(\sigma_1)})\mid (i,\tau)\in\Tr^\circ_{\sigma_1}(\beta_{\ell(\sigma_1)},\beta)\}$;
\item  $\alpha_0>\ldots>\alpha_{m}$ denote the
$(\overrightarrow{C},\sigma_2)$-walk from $\alpha$ down to $\eta+1$;
\end{itemize}

We would like to assign $\rts(\alpha,\beta):=(\alpha_{\ell(\sigma_2)},\beta_{\ell(\sigma_1)})$.
Of course, if $\ell(\sigma_1)> n$, $\alpha<\eta+1$, or $\ell(\sigma_2)> m$,
then the above definition would not make sense. In such cases, we just put $\rts(\alpha,\beta):=(\alpha,\beta)$.

The next definition is parallel to Definition \ref{dddd}.

\begin{defn} For  $A\s\lambda^+$, $\eta\le\epsilon<\lambda^+$, and $\sigma\in{}^{<\omega}\omega$, let
$$A_{\sigma,\eta}(\epsilon):=\left\{\alpha\in A\mid \rho_\sigma(\epsilon,\alpha)=\ell(\sigma), \max\{\sup(C^i_\tau\cap\epsilon)\mid (i,\tau)\in\trs(\epsilon,\alpha)\}=\eta\right\}.$$
\end{defn}

\begin{lemma}\label{34} If $A,B$ are cofinal subsets of $\lambda^+$, and $\theta<\lambda$,
then the following set is stationary:
$$S^{A,B}_{>\theta}:=\{\epsilon\in E^{\lambda^+}_{>\theta}\mid
\exists(\sigma_1,\sigma_2,\eta)\in{}^{<\omega}\omega{}\times^{<\omega}\omega\times\epsilon\left( |B_{\sigma_1,\eta}(\epsilon)|=|A_{\sigma_2,\eta}(\epsilon)|=\lambda^+\right)\}.$$
\end{lemma}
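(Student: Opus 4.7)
The plan is to mimic the proof of Lemma \ref{344}, with three adaptations: the club-guessing sequence $\overrightarrow e$ is replaced by Eisworth's off-center guessing matrix; ordinary walks become generalized walks indexed by sequences $\sigma\in{}^{<\omega}\omega$; and at each descent stage we choose a non-decreasing index $m<\omega$ so that the bound $\mu_m$ dominates the sizes of the relevant terminal clubs inherited from the previous stage. Fix cofinal $A,B\s\lambda^+$, $\theta<\lambda$, and a club $E\s\lambda^+$; the goal is to exhibit $\epsilon\in S^{A,B}_{>\theta}\cap E$.

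Let $S$ be the set of $\delta\in E^{\lambda^+}_{\omega_1}$ admitting an elementary submodel $M\prec\mathcal H(\chi)$ of size $\lambda$ with $\{\overrightarrow C,A,B\}\s M$, $M\cap\lambda^+=\delta>\lambda$, and $\sup(e^m_\delta\cap E\cap E^\delta_{>\mu_m})=\delta$ for every $m<\omega$. Off-center club guessing renders $S$ stationary; let $D$ be the club of its accumulation points. Applying off-center guessing once more, locate $\nu\in E^{\lambda^+}_{\omega_1}$ with $\sup(e^m_\nu\cap D\cap E^\nu_{>\mu_m})=\nu$ for every $m$, and set $\alpha:=\min(A\bks(\nu+1))$, $\beta:=\min(B\bks(\nu+1))$.

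Now carry out a three-stage descent paralleling Lemma \ref{344}. Starting from a trivial seed $\sigma^0:=\langle 0\rangle$, let $\gamma<\nu$ be the maximum of $\sup(C^0_{\tau}\cap\nu)$ over all non-terminal walk points $\tau$ in the $(\overrightarrow C,\sigma^0)$-walks from $\beta$ and $\alpha$ to $\nu$, and let $\beta_n,\alpha_k$ denote the respective penultimate walk points above $\nu$. Choose $m_1<\omega$ with $\mu_{m_1}>\max\{|C^{m_1}_{\beta_n}|,|C^{m_1}_{\alpha_k}|\}$, pick $\varepsilon\in e^{m_1}_\nu\cap D$ above $\gamma$ with $\cf(\varepsilon)>\mu_{m_1}$, and update the walk seeds by assigning their $n$-th (resp.\ $k$-th) coordinate the value $m_1$. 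Since $\nu\in C^{m_1}_{\beta_n}\cap C^{m_1}_{\alpha_k}\cap E^{\lambda^+}_{\omega_1}$ forces $e^{m_1}_\nu\s C^{m_1}_{\beta_n}\cap C^{m_1}_{\alpha_k}$ by the preceding lemma on $\overrightarrow C$, the analog of Claim \ref{c241} goes through: the new walks from $\beta,\alpha$ to $\varepsilon$ extend the previous walks by exactly one step landing in $\varepsilon$. Repeating the descent with a fresh index $m_2$ into $S$ (choosing $\delta\in S$ below $\varepsilon$ above a new $\gamma'$), and once more with $m_3$ into $e^{m_3}_\delta\cap E\cap E^\delta_{>\max\{\mu_{m_3},\theta\}}$, yields a final $\epsilon\in E\cap E^{\lambda^+}_{>\theta}$, final walk seeds $\sigma_1,\sigma_2\in{}^{<\omega}\omega$, a common last walk point $\zeta$ above $\epsilon$, and $\eta:=\sup(C^{m_3}_\zeta\cap\epsilon)<\epsilon$ (using $\cf(\epsilon)>\mu_{m_3}\ge|C^{m_3}_\zeta|$).

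By construction, $\beta\in B_{\sigma_1,\eta}(\epsilon)$ and $\alpha\in A_{\sigma_2,\eta}(\epsilon)$. The model $M$ attached to $\delta$ contains $\overrightarrow C,A,B$ and hence defines both sets from parameters $\sigma_1,\sigma_2,\eta,\epsilon$ lying below $\delta$; yet $\alpha,\beta>\nu>\delta$ lie outside $M\cap\lambda^+=\delta$. Together with $|M|=\lambda$, elementarity then forces $|A_{\sigma_2,\eta}(\epsilon)|=|B_{\sigma_1,\eta}(\epsilon)|=\lambda^+$, placing $\epsilon\in S^{A,B}_{>\theta}\cap E$. The principal obstacle is the inductive selection of the $m_i$'s: the required cofinality bound at each stage depends on the size of the previous stage's terminal club, which itself grew with the previous $m_{i-1}$; the off-center guessing matrix is tailored precisely to absorb this inductive demand, a role that no single guessing sequence could play.
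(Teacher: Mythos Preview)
Your outline is essentially the paper's proof, and the elementarity endgame (defining $A_{\sigma_2,\eta}(\epsilon)$, $B_{\sigma_1,\eta}(\epsilon)$ inside the model $M$ attached to $\delta$ and using $\alpha,\beta\notin M$) is identical. There is one genuine, though minor, deviation and two places where the sketch is loose.

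\textbf{The deviation.} Rather than letting the constant-seed walks $\langle m\rangle$ stabilise as $m\to\infty$ and then choosing a single large $m$, you fix the $\langle 0\rangle$-walk once and for all and overwrite only the terminal coordinate of the seed with a large $m_1$. This works: since $C^0_{\beta_n}\subseteq C^{m_1}_{\beta_n}$ the walk to $\nu$ is unchanged, and $\nu\in C^{m_1}_{\beta_n}\cap E^{\lambda^+}_{\omega_1}$ still forces $e^{m_1}_\nu\subseteq C^{m_1}_{\beta_n}$, so $\varepsilon$ is hit at step $n$. Your route dispenses with the stabilisation paragraph altogether; the paper's route keeps the seeds constant on each segment and needs stabilisation precisely because the penultimate point $\beta^m_{n(m)}$ moves with $m$. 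Either approach produces valid $\sigma_1,\sigma_2$ at the end.

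\textbf{Two imprecisions.} First, the analogue of Claim~\ref{c241} does not say that the walk to $\varepsilon$ \emph{extends} the walk to $\nu$ by a step; it says the two walks have the \emph{same} truncated trace (the terminal point $\nu$ is replaced by $\varepsilon$, not appended to). The genuine extension happens one stage later, when the walk to $\delta$ passes through $\varepsilon$. Second, the index $m_2$ you introduce for the passage from $\varepsilon$ to $\delta$ has no role: there is no $e^m_\varepsilon$ to enter ($\varepsilon$ was chosen with $\cf(\varepsilon)>\mu_{m_1}$, so in general $\varepsilon\notin E^{\lambda^+}_{\omega_1}$), and one simply takes $\delta\in S\cap(\gamma',\varepsilon)$ using $\varepsilon\in D$. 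The paper invokes the off-center matrix exactly twice, once at $\nu$ and once at $\delta$; your $m_1$ and $m_3$ correspond to its $m$ and $w$, and $m_2$ should be dropped.
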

\begin{proof}  Suppose that $A,B$ are cofinal subsets of $\lambda^+$, and $\theta<\lambda$.
 Let $E$ be an arbitrary club subset of $\lambda^+$,
and let us prove that $S^{A,B}_{>\theta}\cap E\not=\emptyset$.

Let $S$ denote the set of all $\delta\in E^{\lambda^+}_{\omega_1}$ for which there exists
an elementary submodel $M\prec\mathcal H(\chi)$
such that:
\begin{itemize}
\item $\{\overrightarrow C,A,B\}\s M$;
\item $M\cap\lambda^+=\delta>\lambda$;
\item $\sup(e^m_\delta\cap E\cap E^{\delta}_{>\mu_m})=\delta$ for all $m<\omega$.
\end{itemize}

Then $S$ is a stationary set, and  $D:=\{\varepsilon<\lambda^+\mid \sup(S\cap\varepsilon)=\varepsilon\}$ is a club.
By the choice of the club guessing sequence $\overrightarrow e$,
let us fix some $\nu\in E^{\lambda^+}_{\omega_1}$ such that $\sup(e^m_\nu\cap E\cap E^\nu_{>\mu_m})=\nu$ for all $m<\omega$.

Let $\beta:=\min(B\bks\nu+1)$, and $\alpha:=\min(A\bks\nu+1)$.
For all $m<\omega$, let
\begin{itemize}
\item $\beta=\beta^m_0>\ldots>\beta^m_{n(m)+1}=\nu$ denote the $(\overrightarrow{C},\langle m\rangle)$-walk
from $\beta$ down to $\nu$;
\item $\alpha=\alpha^m_0\ldots>\alpha^m_{k(m)+1}=\nu$ denote the $(\overrightarrow{C},\langle m\rangle)$-walk
from $\alpha$ down to $\nu$.
\end{itemize}

Clearly, the sequence $\langle \beta_0^m\mid m<\omega\rangle$ is a
constant sequence. As $\{C^m_\beta\mid m<\omega\}$ is an increasing chain,
the sequence $\langle \min(C^m_\beta\bks\nu)\mid m<\omega\rangle$
stabilizes, and hence the sequence $\langle \beta^m_1\mid
m<\omega\rangle$ is eventually constant.
Of course, an iterative application of this last observation yields that the sequence
$\langle \rng(\trm(\nu,\beta)) \mid m<\omega\rangle$ is eventually constant.
By the same kind of consideration, we get that the sequence
$\langle \rng(\trm(\nu,\alpha)) \mid m<\omega\rangle$ is eventually constant.
Thus, let us pick some large enough $m^*$ such that $\langle (\rng(\trm(\nu,\beta)),\rng(\trm(\nu,\alpha)))\mid m^*\le m<\omega\rangle$ is a
constant sequence.

Next, pick an even larger $m<\omega$ so that $m>m^*$, and
$$\mu_m\ge \max\{\cf(\beta^{m^*}_{n(m^*)}),\cf(\alpha^{m^*}_{k(m^*)})\}.$$

For notational simplicity, denote $n:=n(m), k:=k(m), \beta_i:=\beta^m_i, \alpha_j:=\alpha^m_j$.
Put
$$\gamma:=\max(\{\sup(C^m_{\beta_i}\cap\nu)\mid i<n\}\cup \{\sup(C^m_{\alpha_j}\cap\nu)\mid j<k\}).$$

Since $\gamma<\nu$, let us  pick a large enough $\varepsilon\in e^m_\nu\cap D$
such that $\varepsilon>\gamma$ and $\cf(\varepsilon)>\mu_m$.

\begin{claim}\label{291} \begin{enumerate}
\item $\trm(\varepsilon,\beta)=\{(m,\beta_i)\mid i\le n\}$;
\item $\trm(\varepsilon,\alpha)=\{(m,\alpha_j)\mid j\le k\}$.
\end{enumerate}
\end{claim}
\begin{proof}
As  $(\gamma,\nu)\cap C^m_{\beta_i}=\emptyset$ for all $i<n$, and
$\varepsilon\in(\gamma,\nu)$, we get that
$\min(C^m_{\beta_i}\bks\varepsilon)=\min(C^m_{\beta_i}\bks\nu)$ for all
$i<n$, and hence $\rng(\Tr_{\langle m\rangle}(\varepsilon,\beta))\bks\beta_n=\{\beta_i\mid i\le n\}$. Likewise, by $\varepsilon\in(\gamma,\nu)$
and $(\gamma,\nu)\cap C^m_{\alpha_j}=\emptyset$ for all $j<k$, we
get that $\rng(\Tr_{\langle m\rangle}(\varepsilon,\alpha))\bks\alpha_k=\{\alpha_j\mid j\le k\}$.

Since $\nu\in C^m_{\beta_n}\cap C^m_{\alpha_k}\cap E^{\lambda^+}_{\omega_1}$, we
have $e^m_\nu\s C^m_{\beta_n}\cap C^m_{\alpha_k}$, which implies
$\varepsilon\in C^m_{\beta_n}\cap C^m_{\alpha_k}$.
So $\rho_{\langle m\rangle}(\varepsilon,\beta)=n+1$,
$\rho_{\langle m\rangle}(\varepsilon,\alpha)=k+1$, and the assertion of the claim follows immediately.
\end{proof}
Put $$\gamma':=\max\{\sup(C^i_{\tau}\cap\varepsilon)\mid
(i,\tau)\in\trm(\varepsilon,\beta)\cup\trm(\varepsilon,\alpha)\}.$$

Since $\gamma<\varepsilon<\nu$, we have $\gamma'=\max\{\gamma,\sup(C^m_{\beta_n}\cap\varepsilon),\sup(C^m_{\alpha_k}\cap\varepsilon)\}$.
Recalling the choice of $m$, and the fact that $|C^i_\tau|\le\max\{\mu_i,\cf(\tau)\}$ for
all $i$ and $\tau$, we infer that:
$$\max\{|C^{m}_{\beta_{n}}|,|C^{m}_{\alpha_{k}}|\}\le\max\{\mu_{m},\cf(\beta^{m^*}_{n(m^*)}),\cf(\alpha^{m^*}_{k(m^*)})\}=\mu_m.$$

By the choice of $\varepsilon$, we have $\mu_m<\cf(\varepsilon)$,
and hence $\sup(C^m_{\beta_n}\cap\varepsilon)<\varepsilon$,
$\sup(C^m_{\alpha_k}\cap\varepsilon)<\varepsilon$. Altogether, $\gamma'<\varepsilon$.

Consider the constant functions $\sigma_1:n+1\rightarrow\{m\}, \sigma_2:k+1\rightarrow\{m\}$,
and note that $\ell(\sigma_1)=\rho_{\langle m\rangle}(\varepsilon,\beta)$, $\ell(\sigma_2)=\rho_{\langle m\rangle}(\varepsilon,\alpha)$.

Since $\varepsilon\in e^m_\nu\cap D$, we now pick a large enough $\delta\in S$ satisfying $\gamma'<\delta<\varepsilon$,
and consider additional walks. For all $w<\omega$, let:
\begin{itemize}
\item $\beta=\beta^{\cdot w}_0>\ldots>\beta^{\cdot w}_{n(w)+1}=\delta$ denote the $(\overrightarrow{C},\sigma_1{}^\frown w)$-walk
from $\beta$ down to $\delta$;
\item $\alpha=\alpha^{\cdot w}_0>\ldots\alpha^{\cdot w}_{k(w)+1}=\delta$ denote the $(\overrightarrow{C},\sigma_2{}^\frown w)$-walk
from $\alpha$ down to $\delta$.
\end{itemize}

\begin{claim}\label{292} For every $w<\omega$:\begin{enumerate}
\item $\Tr^\circ_{\sigma_1{}^\frown w}(\delta,\beta)=\trw(\delta,\varepsilon)\cup\trm(\varepsilon,\beta)$;
\item $\Tr^\circ_{\sigma_2{}^\frown w}(\delta,\alpha)=\trw(\delta,\varepsilon)\cup\trm(\varepsilon,\alpha)$.
\end{enumerate}
\end{claim}
\begin{proof} Fix $w<\omega$.
By definition of $\gamma'$ and since $\delta\in(\gamma',\varepsilon)$, we get that $\beta^{\cdot w}_i=\beta_i$ for all $i\le n$,
and hence $\beta^{\cdot w}_{n+1}=\varepsilon$. So $\rng(\trm(\varepsilon,\beta))=\{\beta^{\cdot w}_0,\ldots\beta^{\cdot w}_n\}$,
and $\rng(\trw(\delta,\varepsilon))=\{\beta^{\cdot w}_{n+1},\ldots,\beta^{\cdot w}_{n(w)}\}$.

Likewise, $\alpha^{\cdot w}_j=\alpha_j$ for all $j\le k$, and hence $\alpha^w_{k+1}=\varepsilon$.
So $\rng(\trm(\varepsilon,\alpha))=\{\alpha^{\cdot w}_0,\ldots,\alpha^{\cdot w}_k\}$,
and $\rng(\trw(\delta,\varepsilon))=\{\alpha^{\cdot w}_{k+1},\ldots,\alpha^{\cdot w}_{k(w)}\}$.
\end{proof}

Fix a large enough $w^*<\omega$ for which
 $\langle \rng(\trw(\delta,\varepsilon)))\mid w^*\le w<\omega\rangle$ is a
constant sequence.
Also, let:
\begin{itemize}
\item  $\zeta:=\min(\rng(\Tr^\circ_{\langle w^*\rangle}(\delta,\varepsilon)))$;
\item $\gamma_\alpha:=\max\{\sup(C^i_{\tau}\cap\delta)\mid (i,\tau)\in \Tr^\circ_{\sigma_2{}^\frown w^*}(\delta,\alpha), \delta\not\in C^i_\tau\}$;
\item $\gamma_\beta:=\max\{\sup(C^i_{\tau}\cap\delta)\mid (i,\tau)\in \Tr^\circ_{\sigma_1{}^\frown w^*}(\delta,\beta), \delta\not\in C^i_\tau\}$.
\end{itemize}
Evidently, $\gamma_\alpha<\delta$, $\gamma_\beta<\delta$.
Now, pick a large enough $w<\omega$ so that $w>w^*$, and $\mu_w>\cf(\zeta)$.
Then, pick a large enough $\epsilon\in e^w_\delta\cap E$ such that
$\sup(e^w_\delta\cap\epsilon)>\max\{\gamma_\alpha,\gamma_\beta\}$, and
$\cf(\epsilon)>\max\{\mu_w,\theta\}$.

\begin{claim}\label{293} $\Tr^\circ_{\sigma_1{}^\frown w}(\epsilon,\beta)=\Tr^\circ_{\sigma_1{}^\frown w}(\delta,\beta)$, and
$\Tr^\circ_{\sigma_2{}^\frown w}(\epsilon,\alpha)=\Tr^\circ_{\sigma_2{}^\frown w}(\delta,\alpha)$.
\end{claim}
\begin{proof} As in the proof of Claim \ref{291}.
\end{proof}

Denote:
\begin{itemize}
\item $\eta_\alpha:=\max\{\sup(C^i_\tau\cap\epsilon)\mid (i,\tau)\in\Tr^\circ_{\sigma_2{}^\frown w}(\epsilon,\alpha)\}$;
\item $\eta_\beta:=\max\{\sup(C^i_\tau\cap\epsilon)\mid (i,\tau)\in\Tr^\circ_{\sigma_1{}^\frown w}(\epsilon,\beta)\}$.
\end{itemize}

By  definition of $\gamma_\beta,\eta_\beta,\zeta,w,w^*$, Claims \ref{292}, \ref{293},
and the fact that $\gamma_\beta<\epsilon<\delta$,
we get that $\eta_\beta=\max\{\gamma_\beta,\sup(C^w_\zeta\cap\epsilon)\}$.
Since $\delta\in C^w_\zeta$, we get that $\sup(C_\zeta^w\cap\epsilon)\ge\sup(e^w_\delta\cap\epsilon)>\gamma_\beta$,
and hence $\eta_\beta=\sup(C^w_\zeta\cap\epsilon)$.

Likewise, $\eta_\alpha=\sup(C^w_\zeta\cap\epsilon)$.
Denote $\eta:=\sup(C^w_\zeta\cap\epsilon)$.  Then we have just established that
\begin{itemize}
\item $\beta\in B_{\sigma_1',\eta}(\epsilon)$, for $\sigma_1':=\lift(\sigma_1{}^\frown w)\restriction \rho_{\sigma_1{}^\frown w}(\epsilon,\beta)$, and
\item $\alpha\in A_{\sigma_2',\eta}(\epsilon)$, for $\sigma_2':=\lift(\sigma_2{}^\frown w)\restriction \rho_{\sigma_2{}^\frown w}(\epsilon,\alpha)$.
\end{itemize}

By the choice of $\delta$, let us now pick an elementary submodel $M\prec\mathcal H(\chi)$
such that $\{A,B,\overrightarrow C\}\s M$ and $M\cap\lambda^+=\delta$.

The sets $B_{\sigma_1',\eta}(\epsilon)$ and $A_{\sigma_2',\eta}(\epsilon)$ are definable from parameters in $M$,
and hence belongs to $M$. Since $|M|=\lambda$ and $\beta\in B_{\sigma_1',\eta}(\epsilon)\bks M$,
we get that $|B_{\sigma'_1,\eta}(\epsilon)|=\lambda^+$.
Likewise, $|A_{\sigma_2',\eta}(\epsilon)|=\lambda^+$.

By the choice of $\epsilon$,
we have $\cf(\epsilon)>\mu_w>\cf(\zeta)$, and hence $\cf(\epsilon)>\max\{\mu_w,\cf(\zeta)\}\ge|C^w_\zeta|$.
So $\eta<\epsilon$, and hence
$\epsilon\in S^{A,B}_{>\theta}$.

Finally, recalling that $\epsilon\in e^w_\delta\cap E$, we conclude that
$S^{A,B}_{>\theta}\cap E\not=\emptyset$.
\end{proof}

\begin{thm} For every cofinal subsets $A,B$ of $\lambda^+$, and every $\theta<\lambda$,
there exists a stationary subset $S^*\s E^{\lambda^+}_{>\theta}$ such that $\rts[A\circledast B]\supseteq [S^*]^2$.
\end{thm}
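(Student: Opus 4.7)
The plan is to mirror the proof of Theorem \ref{t25} from the uncountable cofinality case, replacing Lemma \ref{344} with Lemma \ref{34}, and upgrading the walk parameters from integers to finite sequences in ${}^{<\omega}\omega$.

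Given cofinal $A,B\s\lambda^+$ and $\theta<\lambda$, Lemma \ref{34} yields that $S^{A,B}_{>\theta}$ is stationary. Since ${}^{<\omega}\omega\times{}^{<\omega}\omega$ is countable, a pigeonhole argument followed by Fodor's lemma produces $\sigma_1,\sigma_2\in{}^{<\omega}\omega$ and $\eta<\lambda^+$ such that
$$S:=\{\epsilon\in E^{\lambda^+}_{>\theta}\mid |B_{\sigma_1,\eta}(\epsilon)|=|A_{\sigma_2,\eta}(\epsilon)|=\lambda^+\}$$
is stationary. Fix a continuous $\in$-chain $\langle M_i\mid i<\lambda^+\rangle$ of elementary submodels of $\mathcal H(\chi)$, each of size $\lambda$, with $\{\overrightarrow\lambda,\overrightarrow f,\overrightarrow C,A,\eta\}\s M_0$, and set $S^*:=\{\epsilon\in S\mid M_\epsilon\cap\lambda^+=\epsilon\}$; this is a stationary subset of $E^{\lambda^+}_{>\theta}$.

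Given $\alpha^*<\beta^*$ from $S^*$, the next task, parallel to Claim \ref{c251}, is to produce $\alpha\in A$ and $\beta\in B$ with $\eta<\alpha^*<\alpha<\beta^*<\beta$, $\rho_{\sigma_1}(\beta^*,\beta)=\ell(\sigma_1)$, $\rho_{\sigma_2}(\alpha^*,\alpha)=\ell(\sigma_2)$, the max-sup invariants along the respective walks both equal to $\eta$, and $\psi(\Delta(\alpha,\beta))=(\sigma_1,\sigma_2)$. Pick $\beta\in B_{\sigma_1,\eta}(\beta^*)$ above $\beta^*$, form the Skolem hull $M:=\sk_{\mathcal H(\chi)}(\{(\overrightarrow\lambda,\overrightarrow f),\overrightarrow C,A,\eta,\alpha^*\}\cup(\omega+1))$, observe $M\in M_{\beta^*}$, and use the scale inside $M_{\beta^*}$ to locate $\delta<\beta^*$ with $\ch^{\overrightarrow\lambda}_M<^*f_\delta<^*f_\beta$. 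Since $\psi^{-1}\{(\sigma_1,\sigma_2)\}$ is infinite, a sufficiently large $i'<\omega$ can be chosen with $\psi(i')=(\sigma_1,\sigma_2)$, $\ch^{\overrightarrow\lambda}_M(i)<f_\beta(i)$ for all $i'<i<\omega$, and $\sup\{f_\alpha(i')\mid \alpha\in A_{\sigma_2,\eta}(\alpha^*)\bks(\alpha^*+1)\}=\lambda_{i'}>|M|$. Setting $N:=\sk_{\mathcal H(\chi)}(M\cup\lambda_{i'})$, pick $\alpha\in A_{\sigma_2,\eta}(\alpha^*)\cap N$ with $f_\alpha(i')>f_\beta(i')$; Baumgartner's lemma then yields $\ch^{\overrightarrow\lambda}_N(i)=\ch^{\overrightarrow\lambda}_M(i)$ whenever $i'<i<\omega$, forcing $\Delta(\alpha,\beta)=i'$.

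It remains to verify $\rts(\alpha,\beta)=(\alpha^*,\beta^*)$. Compare the $(\overrightarrow C,\sigma_1)$-walk $\beta_0>\dots>\beta_n$ from $\beta$ down to $\alpha$ with the $(\overrightarrow C,\sigma_1)$-walk $\beta'_0>\dots>\beta'_{\ell(\sigma_1)}$ from $\beta$ down to $\beta^*$: at each step $i<\ell(\sigma_1)$, the inductive assumption $\beta'_i=\beta_i$ combined with $\beta\in B_{\sigma_1,\eta}(\beta^*)$ gives $\sup(C^{\lift(\sigma_1)(i)}_{\beta_i}\cap\beta^*)\le\eta<\alpha<\beta^*$, so $C^{\lift(\sigma_1)(i)}_{\beta_i}\cap[\alpha,\beta^*)=\emptyset$ and therefore $\min(C^{\lift(\sigma_1)(i)}_{\beta_i}\bks\alpha)=\min(C^{\lift(\sigma_1)(i)}_{\beta_i}\bks\beta^*)=\beta'_{i+1}$. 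Hence $\beta_{\ell(\sigma_1)}=\beta^*$ and the max-sup invariant at stage $\ell(\sigma_1)$ equals $\eta$; a symmetric argument, using $\alpha\in A_{\sigma_2,\eta}(\alpha^*)$ and $\eta<\eta+1\le\alpha^*$, yields $\alpha_{\ell(\sigma_2)}=\alpha^*$ along the $(\overrightarrow C,\sigma_2)$-walk from $\alpha$ down to $\eta+1$. Combined with $\psi(\Delta(\alpha,\beta))=(\sigma_1,\sigma_2)$, the definition of $\rts$ forces $\rts(\alpha,\beta)=(\alpha_{\ell(\sigma_2)},\beta_{\ell(\sigma_1)})=(\alpha^*,\beta^*)$. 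The main obstacle is coordinating the index $i'$ that controls $\Delta(\alpha,\beta)$ with the fixed walk lengths $\ell(\sigma_1),\ell(\sigma_2)$; this is precisely where the infiniteness of each preimage $\psi^{-1}\{(\sigma_1,\sigma_2)\}$ becomes indispensable, letting $i'$ be pushed past any threshold while still landing in the prescribed preimage.
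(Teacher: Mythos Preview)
Your proof is correct and follows exactly the approach the paper takes: the paper only gives a brief sketch for this theorem, pointing out that it is ``very much like the proof of Theorem \ref{t25}'' with Lemma \ref{34} replacing Lemma \ref{344} and $\sigma_1,\sigma_2\in{}^{<\omega}\omega$ in place of $k_1,k_2<\omega$, and your write-up fills in precisely those details in the expected way. The only cosmetic difference is that your Skolem hull uses $\omega+1$ where the paper (in the uncountable case) uses $\cf(\lambda)+1$, which here amounts to the same thing.
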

\begin{proof}[Proof sketch] This is very much like the proof of Theorem \ref{t25}.
Namely, given $A,B$ and $\theta$, we appeal to Lemma \ref{34} and Fodor's lemma,
and find $\sigma_1,\sigma_2\in{}^{<\omega}\omega$ and $\eta<\lambda^+$ such that the following set is stationary:
$$S:=\{\epsilon\in E^{\lambda^+}_{>\theta}\mid |B_{\sigma_1,\eta}(\epsilon)|=|A_{\sigma_2,\eta}(\epsilon)|=\lambda^+\}.$$

We then pick a continuous $\in$-chain of relevant elementary submodels  $\langle M_i\mid i<\lambda^+\rangle$,
let $S^*:=\{ \epsilon\in S\mid M_\epsilon\cap\lambda^+=\epsilon\}$,
and argue that $\rts[A\circledast B]\supseteq [S^*]^2$.
\end{proof}

\section{Conclusions}

Building on the above and Eisworth's \cite{eisworth2},\cite{eisworth1}, we get the next theorem:

\begin{thm}\label{t31} For every singular cardinal, there exists a function $\rtts:[\lambda^+]^2\rightarrow[\lambda^+]^2\times\cf(\lambda)$
such that if $\langle a_\alpha\mid \alpha<\lambda^+\rangle$ is a family of pairwise disjoint members of $[\lambda^+]^{<\cf(\lambda)}$,
then there exists a stationary set $S\s\lambda^+$ such that for all $\langle \alpha,\beta,\delta\rangle\in [S]^2\times \cf(\lambda)$,
there exists $\alpha<\beta<\lambda^+$ such that $D[t_\alpha\times t_\beta]=\{\langle\alpha,\beta,\delta\rangle\}$.
\end{thm}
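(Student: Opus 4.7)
The plan is to set $\rtts(x,y):=(\rts(x,y),d(x,y))$, where $\rts$ is the function produced by Theorem~\ref{t12} and $d:[\lambda^+]^2\to\cf(\lambda)$ is a walks-based coloring obtained from Eisworth's construction in \cite{eisworth2}. Both $\rts$ and $d$ are definable from minimal walks along a common $\overrightarrow{C}$-matrix, so the verification can be carried out by merging their walk-based analyses into a single argument, rather than treating the two coordinates of $\rtts$ independently.

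Given a pairwise disjoint family $\langle a_\alpha\mid\alpha<\lambda^+\rangle$ with each $a_\alpha\in[\lambda^+]^{<\cf(\lambda)}$, I would first pick cofinal ``trace'' subsets of $\lambda^+$ from the family (e.g.\ $A^*:=\{\min(a_\alpha)\mid\alpha<\lambda^+\}$ or any cofinal selection of representatives) and apply Lemma~\ref{344} (or Lemma~\ref{34} in the countable cofinality case) to obtain a stationary set of candidate $\epsilon$'s. Fodor's lemma would then stabilise a triple $(k_1,k_2,\eta)$ (respectively $(\sigma_1,\sigma_2,\eta)$), along with a prescribed value $\delta<\cf(\lambda)$ for the $d$-coordinate. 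After intersecting with a continuous $\in$-chain of elementary submodels $\langle M_i\mid i<\lambda^+\rangle$ absorbing the full family, I would obtain a stationary $S\s E^{\lambda^+}_{>\cf(\lambda)}$ on which all of these parameters are simultaneously correct.

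The key step is to rerun the proof of Claim~\ref{c251}, but with the single ordinals $\alpha,\beta$ replaced by the entire small sets $a_{\bar\alpha},a_{\bar\beta}$. Because $|a_{\bar\alpha}|,|a_{\bar\beta}|<\cf(\lambda)$ and the scale $\overrightarrow{f}$ is strictly $<^*$-increasing, the relevant walks, the values of $\Delta$, and the Eisworth $d$-computation stabilise uniformly across the rectangle $a_{\bar\alpha}\times a_{\bar\beta}$ from some common index $i'<\cf(\lambda)$ onwards. By choosing $i'$ so that $\psi(i')=(k_1,k_2)$ (forcing the $\rts$-coordinate to hit any target $(x,y)\in[S]^2$) and so that Eisworth's code at $i'$ yields the prescribed $\delta$, one concludes that every pair $(a,b)\in a_{\bar\alpha}\times a_{\bar\beta}$ satisfies $\rtts(a,b)=\langle(x,y),\delta\rangle$.

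The main obstacle will be synchronising the $\rts$-coordinate and the $d$-coordinate \emph{simultaneously across entire sets} rather than single pairs of ordinals. This requires extending the Skolem-hull / Baumgartner-style argument appearing in the proof of Theorem~\ref{t25} so that the ambient model $N$ absorbs the whole family $\langle a_\alpha\mid\alpha<\lambda^+\rangle$, and so that the single index $i'$ controls the characteristic functions of every member of $a_{\bar\alpha}$ and of $a_{\bar\beta}$ uniformly, rather than only those of a single $\alpha,\beta$. Once this uniformisation is in hand, the combinatorial heart of the argument is already contained in Theorem~\ref{t25} and in Eisworth~\cite{eisworth2}, exactly as the acknowledgement anticipates.
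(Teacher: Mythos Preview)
Your route is quite different from the paper's, and it contains a gap you have flagged but, I think, underestimated.

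The paper does \emph{not} define $\rtts$ as a product $(\rts,d)$. It composes three functions taken as black boxes: $\rtts(\alpha,\beta):=D\bigl(\rts(\alpha',\beta')\bigr)$, where $(\alpha',\beta',\delta')=D'(\alpha,\beta)$, with $D'$ from \cite{eisworth2} and $D$ from \cite{eisworth1}. The logic is purely modular: $D'$ already collapses each small rectangle $a_{\bar\alpha}\times a_{\bar\beta}$ to a \emph{single} output triple $(\alpha',\beta',\delta')$ ranging over $S_0\circledast T_0\times\cf(\lambda)$ for some stationary $S_0,T_0$; then $\rts$ is applied only to the one pair $(\alpha',\beta')$, converting the rectangle $S_0\circledast T_0$ into a square $[C]^2$; finally $D$ turns $[C]^2$ into $[S]^2\times\cf(\lambda)$. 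The verification is a three-line chase through these quoted results, with no walks revisited.

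Your definition $\rtts=(\rts,d)$ forces $\rts$ itself to be constant on $a_{\bar\alpha}\times a_{\bar\beta}$, and this is where the difficulty lies. For $\rts(x,y)=(\alpha^*,\beta^*)$ to hold for every $(x,y)$ in the rectangle, the $k_1$-th step of the $\overrightarrow C$-walk from \emph{each} $y\in a_{\bar\beta}$ down to \emph{each} $x\in a_{\bar\alpha}$ must land on the same ordinal $\beta^*$. The scale argument you invoke (``$|a_{\bar\alpha}|,|a_{\bar\beta}|<\cf(\lambda)$ and $\overrightarrow f$ is $<^*$-increasing'') controls only $\Delta(x,y)$, hence the pair $(k_1,k_2)$; it says nothing about the actual trajectories of the walks, which begin at distinct points $y\in a_{\bar\beta}$ and have no a~priori reason to coincide at step $k_1$. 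Rerunning Claim~\ref{c251} does not help here: that claim \emph{chooses} a single $\beta\in B_{k_1,\eta}(\beta^*)$, whereas you would need an entire block $a_{\bar\beta}$ to lie inside $B_{k_1,\eta}(\beta^*)$, which is an additional combinatorial constraint on the $\overrightarrow C$-sequence, not on the scale. Arranging this would essentially mean reproving the main theorem of \cite{eisworth2} with $\rts$ woven in, rather than quoting it --- possibly feasible, but far more than the paper's short compositional argument, and your proposal does not indicate how the walk trajectories would be synchronised.
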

\begin{proof} In \cite{eisworth1}, Eisworth proved that there exists a function $D:[\lambda^+]\rightarrow[\lambda^+]^2\times\cf(\lambda)$
such that for every cofinal $A\s \lambda^+$, there exists a stationary subset $S\s\lambda^+$
such that $D``[A]^2\supseteq [S]^2\times\cf(\lambda)$.
Shortly afterwards, in a subsequent paper, Eisworth proved \cite{eisworth2}
that there exists a function $D':[\lambda^+]\rightarrow\lambda^+\times\lambda^+\times\cf(\lambda)$
such that if $\langle a_\alpha\mid \alpha<\lambda^+\rangle$ is a family of pairwise disjoint members of $[\lambda^+]^{<\cf(\lambda)}$,
then there are stationary subsets $S$ and $T$ of $\lambda^+$, such that for all $\langle \alpha,\beta,\delta\rangle\in S\circledast T\times \cf(\lambda)$,
there exists $\alpha<\beta<\lambda^+$ such that $D'[a_\alpha\times a_\beta]=\{\langle\alpha,\beta,\delta\rangle\}$.

Thus, given $\alpha<\beta<\lambda^+$, let
 $\rtts(\alpha,\beta):=D(\rts(\alpha',\beta'))$, where $D'(\alpha,\beta)=(\alpha',\beta',\delta')$.

 It is easy to see that $\rtts$ has the required properties.
\end{proof}
\begin{remark}
The assertion of Theorem \ref{t31} may be strengthened to require that $S$ contains ordinals of arbitrarily high cofinality.
\end{remark}

We are now in a position to prove the Introduction's Theorem \ref{t11}.

\begin{thm}\label{t32} For every singular cardinal $\lambda$, and every $\theta\le\lambda^+$, the following are equivalent:
\begin{enumerate}
\item $\lambda^+\not\rightarrow[\lambda^+]^2_\theta$;
\item $\lambda^+\not\rightarrow[\lambda^+;\lambda^+]^2_\theta$;
\item $\pr_1(\lambda^+,\lambda^+,\theta,\cf(\lambda))$.
\end{enumerate}
\end{thm}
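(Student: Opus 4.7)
The plan is to close the cycle $(3)\Rightarrow(2)\Rightarrow(1)\Rightarrow(3)$. The first two implications are classical and follow directly from the definitions, so the entire mathematical content lies in $(1)\Rightarrow(3)$, which reduces to a bookkeeping application of Theorem~\ref{t31}.

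For $(3)\Rightarrow(2)$, given a witness $f$ to $\pr_1(\lambda^+,\lambda^+,\theta,\cf(\lambda))$ and cofinal $A,B\s\lambda^+$ of size $\lambda^+$, I would thin out so that $A\cap B=\emptyset$ and use a bijection between $A$ and $B$ to produce a pairwise disjoint family $\mathcal{A}=\{\{a_\xi,b_\xi\}\mid\xi<\lambda^+\}$ of two-element sets (permissible since $2<\cf(\lambda)$). For each color $\zeta<\theta$, applying $\pr_1$ yields $\xi'<\eta'$ with $f[\{a_{\xi'},b_{\xi'}\}\times\{a_{\eta'},b_{\eta'}\}]=\{\zeta\}$; in particular $f(\{a_{\xi'},b_{\eta'}\})=\zeta$ exhibits a pair in $A\times B$ of color $\zeta$. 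The implication $(2)\Rightarrow(1)$ is the special case $A=B$.

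For $(1)\Rightarrow(3)$, let $f$ witness $(1)$ and let $\rtts$ be the function supplied by Theorem~\ref{t31}. Define $g:[\lambda^+]^2\to\theta$ by $g(\xi,\eta):=f(\alpha,\beta)$, where $\rtts(\xi,\eta)=(\alpha,\beta,\delta)$. Given a pairwise disjoint family $\langle a_\xi\mid\xi<\lambda^+\rangle$ in $[\lambda^+]^{<\cf(\lambda)}$ and a color $\zeta<\theta$, Theorem~\ref{t31} produces a stationary $S\s\lambda^+$ with the stated transfer property. Since $|S|=\lambda^+$, hypothesis $(1)$ furnishes some $(\alpha,\beta)\in[S]^2$ with $f(\alpha,\beta)=\zeta$; fixing any $\delta<\cf(\lambda)$, the transfer property provides $\xi'<\eta'<\lambda^+$ with $\rtts[a_{\xi'}\times a_{\eta'}]=\{(\alpha,\beta,\delta)\}$, and hence $g[a_{\xi'}\times a_{\eta'}]=\{f(\alpha,\beta)\}=\{\zeta\}$, as required by $\pr_1$.

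The hard work has already been absorbed into Theorem~\ref{t31}, so no genuine obstacle remains at this stage. The conceptual point worth flagging is why the weaker Theorem~\ref{t12} would not suffice to derive $(3)$ directly: it yields only a cofinal $C$ with $\rts[A\circledast B]\supseteq[C]^2$, whereas $\pr_1$ demands that an entire rectangle $a\times b$ be monochromatic, so one genuinely needs the ``thick'' transfer encoded in $\rtts$, which collapses each $a_{\xi'}\times a_{\eta'}$ to a single value under $g$.
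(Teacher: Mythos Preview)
Your proof is correct, and the core idea---composing with $\rtts$ from Theorem~\ref{t31} to obtain a $\pr_1$ witness---is the same as the paper's. The only organizational difference is that you close the cycle in the opposite direction: the paper proves $(1)\Rightarrow(2)$ explicitly via $\rts$ (Theorem~\ref{t12}), then $(2)\Rightarrow(3)$ via $\rtts$, then $(3)\Rightarrow(1)$ trivially; you instead do $(3)\Rightarrow(2)$ by the two-element-block argument, $(2)\Rightarrow(1)$ trivially, and $(1)\Rightarrow(3)$ via $\rtts$.

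Your route has the minor advantage that it never invokes $\rts$ directly in the proof of this theorem---though of course $\rts$ is already baked into the construction of $\rtts$ in Theorem~\ref{t31}, so nothing is truly saved. Conversely, the paper's route makes the role of $\rts$ visible: it isolates $(1)\Rightarrow(2)$ as a standalone application of the rectangles-to-squares transform, which is after all the main point of the paper. Your observation that the paper's $(2)\Rightarrow(3)$ step only really uses hypothesis $(1)$ (since one applies the coloring to $[S]^2$, not to a genuine rectangle) is accurate and is exactly why your direct $(1)\Rightarrow(3)$ works.
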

\begin{proof}

$(1)\Rightarrow (2)$. Suppose that $f:[\lambda^+]^2\rightarrow\theta$ is a function witnessing $\lambda\not\rightarrow[\lambda^+]^2_\theta$.
Define a function $g:[\lambda^+]^2\rightarrow\theta$ by stipulating that $g(\alpha,\beta):=f(\rts(\alpha,\beta))$. Then $g$ witnesses
$\lambda\not\rightarrow[\lambda^+;\lambda^+]^2_\theta$.

$(2)\Rightarrow (3)$. Suppose that $f:[\lambda^+]^2\rightarrow\theta$ is a function witnessing $\lambda\not\rightarrow[\lambda^+;\lambda^+]^2_\theta$.
Define a function $g:[\lambda^+]^2\rightarrow\theta$ by stipulating that $g(\alpha,\beta):=f(\alpha',\beta')$,
where $\rtts(\alpha,\beta)=(\alpha',\beta',\delta)$, and $\rtts$ is the function given by the preceding theorem.
Then $g$ witnesses $\pr_1(\lambda^+,\lambda^+,\theta,\cf(\lambda))$.

$(3)\Rightarrow (1)$ Suppose that $f:[\lambda^+]^2\rightarrow\theta$ is a function witnessing $\pr_1(\lambda^+,\lambda^+,\theta,\cf(\lambda))$.
Then $f$ witnesses $\lambda\not\rightarrow[\lambda^+]^2_\theta$, as well.
\end{proof}

As an immediate corollary, we get an affirmative answer to the question raised by Eisworth in \cite{eisworth3}.
\begin{cor} If $\lambda$ is a singular cardinal and $\pr_1(\lambda^+,\lambda^+,\lambda^+,\cf(\lambda))$ fails,
then any collection of less than $\cf(\lambda)$ many stationary subsets of $\lambda^+$ must reflect simultaneously.
\end{cor}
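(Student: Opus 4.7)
My proposal is to argue contrapositively. I assume there is a collection $\langle S_i\mid i<\kappa\rangle$ of fewer than $\cf(\lambda)$ many stationary subsets of $\lambda^+$ with no common reflection point, in the sense that no ordinal $\delta<\lambda^+$ of uncountable cofinality has $S_i\cap\delta$ stationary in $\delta$ for every $i<\kappa$ simultaneously. The goal is to derive $\pr_1(\lambda^+,\lambda^+,\lambda^+,\cf(\lambda))$; the corollary then follows upon contraposing.

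The bridge is Theorem \ref{t32} applied with $\theta=\lambda^+$: it tells us that $\pr_1(\lambda^+,\lambda^+,\lambda^+,\cf(\lambda))$ is equivalent to the classical negative partition relation $\lambda^+\not\rightarrow[\lambda^+]^2_{\lambda^+}$. Hence, once the hypothesis of non-simultaneous reflection has been translated into the classical partition relation, we are done, because $(1)\Rightarrow(3)$ of Theorem \ref{t32} supplies the coloring that $\pr_1$ demands.

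For the remaining translation I would invoke a classical theorem of Shelah (see, e.g., \cite{sh:g} and \cite{sh572}) asserting that whenever $\lambda$ is singular and there exist fewer than $\cf(\lambda)$ many stationary subsets of $\lambda^+$ without a common reflection point, one already has $\lambda^+\not\rightarrow[\lambda^+]^2_{\lambda^+}$. The proof typically combines a scale for $\lambda$ with a minimal-walks coloring: the failure of simultaneous reflection provides, at every candidate reflection point, a witness $i<\kappa$ with $S_i$ non-stationary there, which is then used to show that on every $\lambda^+$-sized set $A\subseteq\lambda^+$ the coloring attains every color in $\lambda^+$.

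Composing these two inputs yields the implication \emph{non-simultaneous reflection $\Rightarrow$ $\pr_1(\lambda^+,\lambda^+,\lambda^+,\cf(\lambda))$}, whose contrapositive is exactly the statement of the corollary. The only real obstacle is verifying that the Shelah input is on record in the precise form needed (non-simultaneous reflection of fewer than $\cf(\lambda)$ stationary sets, rather than just a single non-reflecting stationary set); modulo that citation, the corollary really is immediate from Theorem \ref{t32}, which is presumably why the paper labels it as such.
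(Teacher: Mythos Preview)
Your logical skeleton is exactly the paper's: use Theorem~\ref{t32} to reduce $\pr_1(\lambda^+,\lambda^+,\lambda^+,\cf(\lambda))$ to the classical relation $\lambda^+\not\rightarrow[\lambda^+]^2_{\lambda^+}$, and then invoke an external result linking the latter to simultaneous reflection of fewer than $\cf(\lambda)$ stationary sets. The paper proceeds directly rather than contrapositively, but that is immaterial.

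The one point to correct is the attribution of the external input. The implication ``non-simultaneous reflection of fewer than $\cf(\lambda)$ stationary subsets of $\lambda^+$ $\Rightarrow$ $\lambda^+\not\rightarrow[\lambda^+]^2_{\lambda^+}$'' (equivalently, its contrapositive) is not a classical Shelah theorem from \cite{sh:g} or \cite{sh572}; it is the main result of Eisworth's preprint \cite{eisworth3}, and the paper cites it as such. Shelah's earlier work gives colorings from a single non-reflecting stationary set (or from club-guessing hypotheses), but the upgrade to fewer than $\cf(\lambda)$ many sets failing to reflect \emph{simultaneously} is precisely Eisworth's contribution in \cite{eisworth3}. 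You correctly flagged this as the only uncertain step; the resolution is simply to cite \cite{eisworth3} rather than Shelah.
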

\begin{proof} By \cite{eisworth3}, if $\lambda^+\not\rightarrow[\lambda^+]^2_{\lambda^+}$ fails,
then any collection of less than $\cf(\lambda)$ many stationary subsets of $\lambda^+$ must reflect simultaneously.
Now, appeal to Theorem \ref{t32}.
\end{proof}

\bibliographystyle{plain}

\begin{thebibliography}{10}

\bibitem{baum}
James~E. Baumgartner.
\newblock A new class of order types.
\newblock {\em Ann. Math. Logic}, 9(3):187--222, 1976.

\bibitem{eisworth4}
Todd Eisworth.
\newblock Club guessing, stationary reflection, and coloring theorems.
\newblock {\em Ann. Math. Logic}, 161:1216--1243, 2010.

\bibitem{eisworth2}
Todd Eisworth.
\newblock A coloring theorem for successors of singular cardinals.
\newblock {\em submitted to JSL}, December 2009.

\bibitem{eisworth1}
Todd Eisworth.
\newblock Getting more colors.
\newblock {\em submitted to JSL}, December 2009.

\bibitem{eisworth3}
Todd Eisworth.
\newblock Simultaneous reflection and impossible ideals.
\newblock {\em preprint}, December 2010.

\bibitem{sh535}
Todd Eisworth and Saharon Shelah.
\newblock Successors of singular cardinals and coloring theorems. i.
\newblock {\em Archive for Mathematical Logic}, 44:597--618, 2005.

\bibitem{EiSh:819}
Todd Eisworth and Saharon Shelah.
\newblock Successors of singular cardinals and coloring theorems. ii.
\newblock {\em Journal of Symbolic Logic}, 74:1287--1309, 2009.

\bibitem{partitionrelationsforcardinalnumbers}
P.~Erd{\H{o}}s, A.~Hajnal, and R.~Rado.
\newblock Partition relations for cardinal numbers.
\newblock {\em Acta Math. Acad. Sci. Hungar.}, 16:93--196, 1965.

\bibitem{combintorialsettheory}
Paul Erd{\H{o}}s, Andr{\'a}s Hajnal, Attila M{\'a}t{\'e}, and Richard Rado.
\newblock {\em Combinatorial set theory: partition relations for cardinals},
  volume 106 of {\em Studies in Logic and the Foundations of Mathematics}.
\newblock North-Holland Publishing Co., Amsterdam, 1984.

\bibitem{jensen1972}
R.~Bj{\"o}rn Jensen.
\newblock The fine structure of the constructible hierarchy.
\newblock {\em Ann. Math. Logic}, 4:229--308; erratum, ibid. 4 (1972), 443,
  1972.
\newblock With a section by Jack Silver.

\bibitem{lspace}
Justin~Tatch Moore.
\newblock A solution to the {$L$} space problem.
\newblock {\em J. Amer. Math. Soc.}, 19(3):717--736 (electronic), 2006.

\bibitem{rinot12}
Assaf Rinot.
\newblock The ostaszewski square.
\newblock {\em work in progress}, 2010.

\bibitem{sh276}
Saharon Shelah.
\newblock Was sierpi\'nski right? i.
\newblock {\em Israel Journal of Mathematics}, 62:355--380, 1988.

\bibitem{sh:g}
Saharon Shelah.
\newblock {\em Cardinal arithmetic}, volume~29 of {\em Oxford Logic Guides}.
\newblock The Clarendon Press Oxford University Press, New York, 1994.
\newblock Oxford Science Publications.

\bibitem{sh572}
Saharon Shelah.
\newblock Colouring and non-productivity of {$\aleph\sb 2$}-c.c.
\newblock {\em Ann. Pure Appl. Logic}, 84(2):153--174, 1997.

\bibitem{todorcevic}
Stevo Todor{\v{c}}evi{\'c}.
\newblock Partitioning pairs of countable ordinals.
\newblock {\em Acta Math.}, 159(3-4):261--294, 1987.

\bibitem{MR2355670}
Stevo Todorcevic.
\newblock {\em Walks on ordinals and their characteristics}, volume 263 of {\em
  Progress in Mathematics}.
\newblock Birkh\"auser Verlag, Basel, 2007.

\end{thebibliography}

\end{document}